		\def\version{16 December 2022}				       %

\documentclass[reqno,11pt]{amsart} 
\usepackage{amsmath,amsthm,amssymb,a4,graphics,color}
\usepackage[active]{srcltx}
\usepackage[usenames,dvipsnames]{pstricks}
\usepackage{epsfig}
\usepackage{pst-grad}
\usepackage{pst-plot} 
\usepackage{tikz}
\usepackage{hyperref}



\newfam\Bbbfam 
\font\tenBbb=msbm10 
\font\sevenBbb=msbm7 
\font\fiveBbb=msbm5 
\textfont\Bbbfam=\tenBbb 
\scriptfont\Bbbfam=\sevenBbb 
\scriptscriptfont\Bbbfam=\fiveBbb

\setlength{\textheight}{8.4in} 
\setlength{\textwidth}{6.6in} 
\setlength{\topmargin}{0in} 
\setlength{\headheight}{0.12in} 
\setlength{\headsep}{.40in} 
\setlength{\parindent}{1pc} 
\setlength{\oddsidemargin}{-0.1in} 
\setlength{\evensidemargin}{-0.1in} 

\marginparwidth 40pt 
\marginparsep 0pt 
\oddsidemargin -5mm 
\topmargin -30pt 
\headheight 12pt 
\headsep 15pt 
\footskip 15pt 
\textheight 670pt 
\textwidth 170mm 
\columnsep 10pt 
\columnseprule 0pt 
\sloppy 
\parskip 0.8ex plus0.3ex minus0.2ex 
\parindent1.0em 

 
\newtheorem{theorem}{Theorem}[section] 
\newtheorem{lemma}[theorem]{Lemma} 
\newtheorem{prop}[theorem] {Proposition} 
\newtheorem{cor}[theorem]  {Corollary} 
\newtheorem{remark}[theorem]  {Remark}

\theoremstyle{definition}

\newcommand{\E}{\mathbb E}

\newcommand{\R}{\mathbb R}
\newcommand{\N}{\mathbb N}

\renewcommand{\P}{\mathbb P}

\newcommand{\smfrac}[2]{\textstyle{\frac {#1}{#2}}}
\def\1{{\mathchoice {1\mskip-4mu\mathrm l}      
{1\mskip-4mu\mathrm l} 
{1\mskip-4.5mu\mathrm l} {1\mskip-5mu\mathrm l}}} 
\newcommand{\ssup}[1] {{\scriptscriptstyle{({#1})}}}

\renewcommand{\qed}{\hfill\ensuremath{\square}}

\renewcommand{\d}{{\rm d}} 
 
\newcommand{\eps}{\varepsilon}

\newcommand{\Poi}{{\operatorname {Poi}}}

\newcommand{\Mcal}   {{\mathcal M }} 
\newcommand{\Ncal}   {{\mathcal N }}

\newcommand{\Xcal}   {{\mathcal X }}

\newcommand{\e}   {{\operatorname e }}

\numberwithin{equation}{section}

\begin{document}
\title[Markovian description of multi-channel ALOHA and CSMA]{Multi-channel ALOHA and CSMA medium-access protocols:\\\medskip
Markovian description and large deviations}
\author[Wolfgang K\"onig and Helia Shafigh]{}
\maketitle
\thispagestyle{empty}
\vspace{-0.5cm}

\centerline{\sc 
Wolfgang K\"onig\footnote{TU Berlin and WIAS Berlin, Mohrenstra{\ss}e 39, 10117 Berlin, Germany, {\tt koenig@wias-berlin.de}} and Helia Shafigh\footnote{WIAS Berlin, Mohrenstra{\ss}e 39, 10117 Berlin, Germany, {\tt shafigh@wias-berlin.de}}}
\renewcommand{\thefootnote}{}
\vspace{0.5cm}
\centerline{\textit{WIAS Berlin and TU Berlin, and WIAS Berlin}}

\bigskip

\centerline{\small(\version)} 
\vspace{.5cm}

\begin{abstract}
We consider a multi-channel communication system under ALOHA and CSMA protocols, resepctively, in continuous time. We derive probabilistic formulas for the most important quantities: the numbers of sending attempts and the number of successfully delivered messages in a given time interval. We derive (1) explicit formulas for the large-time limiting throughput, (2) introduce an explicit and ergodic Markov chain for a deeper probabilistic analysis, and use this to (3) derive exponential asymptotics for rare events for these quantities in the limit of large time, via large-deviation principles. 
 \end{abstract}

\vspace{.2cm}
\bigskip\noindent
{\it MSC 2010.} 60K35, 82C21;

\medskip\noindent
{\it Keywords and phrases.} Communication system, random medium access strategies, throughput,  multi-channel ALOHA protocol, CSMA, Markov chains, Markov renewal process, large deviations.

\section{Introduction and main results}\label{sec-Intro}

Protocols for medium access control (MAC) are fundamental and ubiquitous in any telecommunication system. Here we are particularly interested in {\em multi-channel systems}, where a fixed number of channels is available. Our goal is to develop a probabilistic model for the ALOHA and the CSMA protocol, which can be easily realised on a computer and be mathematically analysed in explicit terms. We will describe the stochastic process of arrival times of incoming and successfully delivered messages and the times that elapse in between in terms of a Markov renewal process that is explicit and has very good ergodic properties.

\subsection{Medium access protocols; our goals}\label{sec-background}

We consider the {\em ALOHA protocol}, where no infra\-structure is given and collisions are possible, and the  {\it Carrier Sense Multiple Access (CSMA) protocol,} where a message is delivered only if there is an idle channel at the time  when the message arrives. We work in continuous time and assume that the messages arrive at random times that are given by a Poisson point process (PPP). To keep things simple, we assume that the service times (delivery times) are all equal to one. 

We are interested in a {\em multi--channel system}, i.e., we assume that at most $\kappa$ messages can be processed at any given time, where $\kappa\in\N$ is a parameter. We think of two interpretations of this restriction: Either there are $\kappa$ channels available in our system that can be used independently all the time, or there are interference constraints that make it impossible that more than $\kappa$ messages can be transmitted at the same time, and any additional message is refused from the system. 

Our interest lies on important quantities like the total number of messages in the system, the number of incoming messages and the number of successfully delivered messages in a given fixed time interval. We strive to calculate expected values (in the limit of large time intervals), which follows elementary ideas, but also to analyse more detailed questions, like probabilities of certain events, which needs a deeper understanding of the communication system.

To this sake, as one of our main novelties, we develop a description in terms of an {\em explicit Markov chain} (in discrete time) that admits a description of the mentioned quantities as functionals of this chain in terms of a kind of a {\em Markov renewal process}. To the best of our knowledge, such a Markov chain was not yet known in comparable situations, even though there are a number of ansatzes with queues and the related theory; however these stochastic processes are not able to give information about the real time, but only about certain quantities (i.e., number of messages in the system) at the (random) arrival times or the (random) delivery times of the messages. Our Markov chain  makes the application of a number of well-known {\em probabilistic tools} available, like invariant initial distributions, ergodic theory and large deviations theory. We give explicit formulas for the transition probabilities and prove that this chain is uniformly ergodic, hence this Markov chain is also useful for making computer simulations.

Our Markov chain in particular opens the possibility to describe (the probabilities of) rare events, which is for example helpful if one wants to understand ubiquitous situations in which the system underachieves by producing a smaller throughput than is expected over a long time stretch. The probabilistic  {\em theory of large deviations} provides mathematical tools for deriving formulas for the exponential decay of the probability, and it provides also tools for characterising the most likely behaviour of the system in this unlikely situation. For the application of this theory, one needs a powerful description and a high degree or ergodicity, and this is provided by our Markov chain. Unfortunately, our description does not allow for the determination of sharp exponential lower bounds for the probabilities of large deviations, but only exponential upper bounds. But we believe it is the exponential upper bounds that shows the value of the large-deviation theory for understanding such communication system.

The most important parameters in the system will be the number of channels, $\kappa\in\N$, and the density parameter of the incoming messages, $\lambda\in(0,\infty)$. We assume that the arrival times follow a standard Poisson point process with parameter $\lambda$. We will conceive the situation from the user's perspective and will discuss the optimal value of $\lambda$ for achieving a {\em maximal throughput}. The idea behind this is that each user has the knowledge about the number of users in a vicinity of the $\kappa$ channels and assumes that each of them makes message sending attempts at a certain rate that amounts to the total rate $\lambda$ of all the message attempts. Under these assumptions, the optimal value of $ \lambda$, divided by the number of users,  should then be the probability parameter for making sending attempts.

In the CSMA setting, it will be clear that the throughput is an increasing function of $\lambda$,  and the optimisation is trivial (ignoring a potential trade-off coming from a huge number of unsuccessful messages). However, for the ALOHA setting, it will be interesting to identify the optimal density $\lambda$ (depending on $\kappa$) for having a maximal throughput; this will be one of our results. 

Summarizing, the main contributions of this paper are the following.

\begin{itemize}
\item An explicit probabilistic description (in terms of a Markov renewal process) of the number of messages and the number of successfully delivered messages and more quantities at a deterministic time,

\item explicit formulas for limiting expected values of these quantities and optimal values of parameters,

\item a large-deviations analysis of rare events involving these quantities.

\end{itemize}

\subsection{Description of the models}\label{sec-slottedALOHA}

We consider a system with a steady flow of incoming messages that require access to the system at random times. The time lags between any two subsequent arrivals of two messages are independent exponentially distributed times with density parameter $\lambda\in(0,\infty)$. That is, the sequence of arrival times forms a standard Poisson point process (PPP) with parameter $\lambda$. Any successful message transmission has a duration of precisely one time unit; i.e., each service time is equal to one.

We assume that $\kappa$ channels are available. On arrival, each message asks for access to some of them. Now we consider two different algorithms (medium access protocols) according to which this request is handled:

\begin{itemize}
\item[$\bullet$] {\it ALOHA:}  For the message, one of the $\kappa$ channels is picked uniformly at random. All these channel choices are independent over all messages.
\begin{itemize}
\item If the channel is already busy i.e., if the transmission of another message in this channel is still running, then the new incoming message collides with the old one, causing the cancellation of both messages. 

\item If the channel is idle, the new message is admitted immediately. If not cancelled by another message that arrives later during the delivery time, it will be successfully delivered after one time unit.
\end{itemize}

\item[$\bullet$]  {\it CSMA:} The message is admitted to the system only if there is an empty channel; then it will be successfully delivered via one of these channels after one time unit. Otherwise, the message attempt is canceled.
\end{itemize}

In the case of a successful delivery after one time unit, we say that the message has gained access to the medium.

Advantages of pure ALOHA are that it does not need any infrastructure and is therefore cheap to install and run. However, a drawback is that on each arrival of a message might destroy another message that has been already admitted to the system. In turn, this means that each message can be sure to be successfully delivered only one time unit after it has picked an empty channel, namely if has not itself been killed by a later arriving message during its delivery. This means that too large a number of incoming messages (i.e., too large a large value of $\lambda$) decreases on an average the amount of successful deliveries in the system on a long time. We will specify this in terms of a law of large numbers and will see that only a certain percentage of all the channels are typically busy in order to achieve an optimal throughput in this protocol, and we will identify this value.

In CSMA, every admitted message will definitely be successfully delivered after its delivery time. However, in contrast with the ALOHA protocol, some extra information (namely the information about free channels) needs to be constantly provided. Hence, increasing the message density $\lambda$ increases the number of busy channels on an average and hence the throughput (which we quantify below), but also the average number of refused messages (which we neglect in this paper).

\subsection{Our results}\label{sec-Results}

Introduce $A(t)$ as the number of sending attempts in the time interval $[0,t]$ and $S(t)$ as the number of successful transmissions during this time interval. Then $(A(t))_{t\in[0,\infty)}$ is the counting process for the PPP$(\lambda)$, but $(S(t))_{t\in[0,\infty)}$ is highly non-trivial and is the main objective. We formulate our results on the limiting expectation in Section~\ref{sec-expval}, on a crucial Markov chain in Section~\ref{sec-MC} and on the probabilities of large deviations in Section~\ref{sec-LD}.

\subsubsection{Limiting expectation}\label{sec-expval}

Let us calculate the limiting expectation of the number of successfully delivered messages:

\begin{lemma}[Expected throughput]\label{lem-expval}
For both models, $*\in\{{\rm ALOHA, CSMA}\}$, and for any $\lambda\in(0,\infty)$ and $\kappa\in\N$,
\begin{equation}\label{sdef}
s_*(\lambda,\kappa)=\lim_{t\to\infty}\frac 1t S(t)
\end{equation}
exists and is equal to 
\begin{eqnarray}
s_{\rm CSMA}(\lambda, \kappa)&=&\lambda \frac{\sum_{n=0}^{\kappa-1}\frac{\lambda^n}{n!}}{\sum_{n=0}^{\kappa}\frac{\lambda^n}{n!}}=\lambda \frac{\Poi_\lambda([0,\kappa-1])}{\Poi_\lambda([0,\kappa])},\\
s_{\rm ALOHA}(\lambda, \kappa)&=&\lambda \e^{-\frac{\kappa+1}{\kappa}\lambda}\sum_{n=0}^{\kappa-1}\frac{\lambda^n}{n!}\frac{\kappa-n}{\kappa}
=\lambda \e^{-\frac \lambda\kappa}\e^{-\lambda}\Big[\sum_{n=0}^{\kappa-1}\frac{\lambda^n}{n!}-\frac{\lambda}{\kappa}\sum_{n=0}^{\kappa-2}\frac{\lambda^n}{n!}\Big]\\
&=&\lambda \e^{-\frac \lambda\kappa} \Big[\Poi_\lambda([0,\kappa-1])-\frac{\lambda}{\kappa}\Poi_\lambda([0,\kappa-2])\Big].
\end{eqnarray}
\end{lemma}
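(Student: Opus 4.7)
My plan is to derive both limits from the strong law of large numbers for a regenerative structure and to identify the long-run mean through a stationary analysis of the channel-occupancy process. For both protocols, the hitting times of the empty-system state form a positive-recurrent renewal sequence: starting from any state, a Poisson-quiet interval of length one drains the system, and such an interval has probability $e^{-\lambda}>0$, so the inter-regeneration times are exponentially integrable. The renewal-reward theorem then gives $S(t)/t\to s_*(\lambda,\kappa)$ almost surely, with $s_*(\lambda,\kappa)$ equal to the expected number of successful deliveries per cycle divided by the expected cycle length; equivalently, it is the ergodic average for the uniformly ergodic Markov chain of Section~\ref{sec-MC}.

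\emph{CSMA.} The process is an Erlang-loss queue M/G/$\kappa$/$\kappa$ with constant service time $1$. By the Erlang-B insensitivity theorem, the stationary distribution of the number $N^\infty$ of busy channels is the truncated Poisson $\pi(n)=(\lambda^n/n!)/\sum_{k=0}^{\kappa}(\lambda^k/k!)$ for $n\in\{0,\dots,\kappa\}$. A Poisson arrival sees this distribution (PASTA), is admitted iff $N^\infty<\kappa$, and once admitted is always successful; combined with $A(t)/t\to\lambda$, this is exactly the stated formula for $s_{\rm CSMA}(\lambda,\kappa)$.

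\emph{ALOHA.} I would Palm-condition on a tagged arrival at time $T$ and factor the success probability as
\[
\P(\text{success})=\P(\text{admitted})\,\cdot\,\P\bigl(\text{no same-channel arrival in }(T,T+1)\,\bigl|\,\text{admitted}\bigr).
\]
By PPP thinning, the later arrivals on the tagged channel form, independently of the state at time $T$, a Poisson process of rate $\lambda/\kappa$, so the second factor equals $e^{-\lambda/\kappa}$. For the first factor I would condition on the number of arrivals in the vulnerable backward window $(T-1,T)$, which is $\Poi(\lambda)$-distributed, resolve the cascade of same-channel cancellations inside that window, and identify the surviving ongoing messages on their (necessarily distinct) channels; the sum over surviving configurations should reduce to $e^{-\lambda}\sum_{n=0}^{\kappa-1}(\lambda^n/n!)(\kappa-n)/\kappa$. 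Multiplying by $\lambda e^{-\lambda/\kappa}$ yields the first displayed expression, and the identity $n\cdot\lambda^n/n!=\lambda\cdot\lambda^{n-1}/(n-1)!$ together with the splitting $(\kappa-n)/\kappa=1-n/\kappa$ converts it into the other two equivalent forms.

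The main obstacle is the ALOHA admission computation: the cancellation rule creates non-trivial dependences between the fates of distinct arrivals in $(T-1,T)$, and one has to verify that after all collisions are accounted for, the effective backward occupancy really produces the clean weights $(\lambda^n/n!)(\kappa-n)/\kappa$. I expect this combinatorial step to become most transparent through the explicit Markov chain of Section~\ref{sec-MC}: both $s_{\rm CSMA}$ and $s_{\rm ALOHA}$ should then arise as linear functionals of its closed-form invariant measure, turning the derivation above into a verification.
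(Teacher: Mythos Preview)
Your CSMA argument coincides with the paper's: both invoke the Erlang--loss queue and its truncated-Poisson stationary law (the paper cites \cite{borovkov}; you name insensitivity and PASTA), then multiply the acceptance probability by $\lambda$.

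For ALOHA, your decomposition is exactly the paper's (Section~\ref{sec-expectationALOHA}): Palm-condition on a tagged arrival, factor into $\P(\text{admitted})\cdot\P(\text{survives forward})$, and obtain the forward factor $e^{-\lambda/\kappa}$ by thinning. The paper, however, does \emph{not} resolve any cascade. It simply asserts that the number of busy channels at time $0$ is determined by $A([-1,0))$ alone and writes the admission probability as $\sum_{n=0}^{\kappa-1}\Poi_\lambda(n)(1-n/\kappa)$ with no further combinatorics. So the step you flag as ``the main obstacle'' is precisely the step the paper passes over in one line; your proposed cascade-resolution would be more work than the paper actually does, and it is not clear it would close the gap either, since the busy pattern at time $0$ depends on which channels were already occupied at time $-1$, hence on arrivals before the window $(T-1,T)$ you condition on.

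Two further remarks. First, your fallback plan --- reading both throughputs off a closed-form invariant measure of the Markov chain of Section~\ref{sec-MC} --- is not available: the paper explicitly notes it has no closed formula for that invariant distribution, and Section~\ref{sec-ExpThrough} is carried out independently of the chain. Second, your regenerative/renewal-reward argument for the almost-sure existence of $\lim_{t\to\infty}S(t)/t$ is a genuine addition; the paper does not supply this step.
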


We wrote $\Poi_\lambda$ for the Poisson-distribution with parameter $\lambda$ on $\N_0$. The proof of Lemma~\ref{lem-expval} is in Section~\ref{sec-expectationALOHA} for the ALOHA case and in Section~\ref{sec-expectationCSMA} for the CSMA case.

In contrast with CSMA, for the ALOHA protocol the question for the optimal value of $\lambda$ for maximizing $s_{\rm ALOHA}(\lambda,\kappa)$ is interesting. An explicit calculation does not seem possible for general $\kappa$. However, using the exponential series approximation for the two sums, we see that
 for $\kappa\to\infty$ the throughput is asymptotically equivalent to
\begin{align*}
\lim_{\lambda,\kappa\to\infty,\frac\lambda\kappa\to x}\frac 1\kappa s_{\rm ALOHA}(\lambda, \kappa)= x(1-x)\e^{-x},\qquad x\in[0,\infty),
\end{align*}
which is easily seen to have a unique maximum for $x=\frac{3-\sqrt{5}}{2}\approx 0.38$ by considering the derivatives. Hence, the optimal throughput is roughly $\sup_\lambda s_{\rm ALOHA}(\lambda,\kappa)\approx 0.38\, \kappa$ for large $\kappa$. Simulations show that for $\kappa=2$ resp. $\kappa=3$ we already have an optimum value for $\lambda\approx 0.43\,\kappa$ resp.~$\lambda\approx 0.41\,\kappa$; the above approximation seems to converge extremely fast.

\subsubsection{A crucial Markov chain}\label{sec-MC}

We are going to introduce now the main object of our ansatz, a certain Markov chain in discrete time that is able to describe the main quantities $A(t)$ and $S(t)$. This Markov chain is not only suitable for describing large-deviation events and their probabilities (see Section ~\ref{sec-LD}), but can generally also be used to derive explicit computer simulations for the entire process of messages and their deliveries.

In both models, we denote by $0<\widetilde T_1<\widetilde T_2<\widetilde T_3<\dots$ all the times at which a message is admitted to a channel. For $i\in\N$ put 
$$
\sigma_i=\widetilde T_i-\widetilde T_{i-1}\qquad\mbox{and}\qquad
A_i=\#\{j\colon T_j\in (\widetilde T_{i-1},\widetilde T_i]\}.
$$
In words, $\sigma_i$ is the length of the time lag between the $(i-1)$st and $i$th admittance of a message to some channel, and $A_i-1$ is the number of refused messages during that time interval. Recall that, for CSMA, $\widetilde T_i$ is the time of the beginning of the $i$-th successful message transmission, but for ALOHA this is only the time of the start of some message transmission attempt; whether or not it will be successful will turn out only at time $\widetilde T_i+1$. Nevertheless, we will show that the sequence $(A_i,\sigma_i)_{i\in\N}$ is suitable to derive precise information about our quantities of interest, $(A(t),S(t))$.

As our first main result, we identify the distribution of the sequence as a kind of Markov renewal process:

\begin{prop}[Markovian structure of $(A_i,\sigma_i)_{i\in\N}$]\label{lem-Markov}
In both models, ALOHA and CSMA, the sequence $(A_i,\sigma_i)_{i\in\N}$ is a $(\kappa-1)$-Markov chain with kernel $W_{\rm CSMA}$ and $W_{\rm ALOHA}$, respectively, from $[\N\times (0,\infty)]^{\kappa-1}$ to $\N\times (0,\infty)$ defined by
\begin{eqnarray}
W_{\rm CSMA}\big((a,t), (k,\d s)\big)&=&\frac{\gamma^{k-1}}{(k-1)!}\lambda^k \e^{-\lambda s}\1_{[\gamma,\infty)}(s)\,\d s,\label{WCSMAdef}\\
W_{\rm ALOHA}\big((a,t), (k,\d s)\big)&=&\frac{\big(\gamma+\frac {B(s)}\kappa\big)^{k-1}}{(k-1)!}\left(1-\frac{\beta(s)}{\kappa}\right)\lambda^k\e^{-\lambda s}\1_{[\gamma,\infty)}(s)\,\d s,\label{WALOHAdef}
\end{eqnarray}
for $(a,t)=\big((a_1,t_1),\cdots,(a_{\kappa-1},t_{\kappa-1})\big)$, where $\gamma:=[1-\sum_{i=1}^{\kappa-1}t_i]_+$, and, for the ALOHA case,
\begin{align*}
\beta(s):=\max\Big\{m\in\N\colon  s+\sum_{j=0}^{m-2}t_{\kappa-1-j}\leq 1\Big\}\wedge\kappa \in\{0,1,\dots,\kappa\}\qquad\mbox{and}\qquad B(s)=\int_0^s\beta (r)\,\d r.
\end{align*}
Both $(\kappa-1)$-Markov chains are uniformly ergodic in the sense that Condition (U) holds (see \eqref{U}).
\end{prop}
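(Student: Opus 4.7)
The plan is to derive both kernels from the underlying driving randomness---the rate-$\lambda$ Poisson arrival process and, for ALOHA, the i.i.d.\ uniform channel choices independent of it---and to observe that the busy/blocked status of each of the $\kappa$ channels throughout $[\widetilde T_{i-1},\widetilde T_i]$ is a \emph{deterministic} function of the most recent $\kappa-1$ admission gaps $t_j=\sigma_{i-\kappa+j}$. In CSMA this is immediate, since an admission occupies its channel for exactly one time unit. In pure ALOHA the same is true under the ``reservation'' interpretation implicit in the model: once admitted, a channel stays in the busy state for the full one time unit even if the message is later destroyed by a collision; moreover, admissions falling within a common one-time-unit window necessarily occupy distinct channels (else the later one would have collided rather than been admitted). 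This structural fact yields the $(\kappa-1)$-Markov property; the explicit forms of the kernels then follow from computing the conditional joint law of $(A_i,\sigma_i)$.

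For CSMA, if $\sum_j t_j\ge 1$ then $\gamma=0$ and a channel is idle at $\widetilde T_{i-1}^+$, so $A_i=1$ and $\sigma_i\sim\Exp(\lambda)$; if $\sum_j t_j<1$ then all $\kappa$ channels are busy and the oldest frees at $\widetilde T_{i-1}+\gamma$, and the PPP then supplies $A_i-1\sim\Poi(\lambda\gamma)$ refusals and an independent $\sigma_i-\gamma\sim\Exp(\lambda)$, reproducing~\eqref{WCSMAdef}. For ALOHA, write $\beta(r)$ for the deterministic blocked-channel count at time $\widetilde T_{i-1}+r$. Since each arrival picks a channel uniformly and independently, the standard thinning theorem splits the arrival PPP on $[\widetilde T_{i-1},\widetilde T_i)$ into independent inhomogeneous Poisson streams of intensities $\lambda\beta(r)/\kappa$ (refusals) and $\lambda(1-\beta(r)/\kappa)$ (admissions). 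The first-admission density of $\sigma_i=s$ and the conditional Poisson law of the refusal count $A_i-1$ then combine, after simplifying the two exponentials, into~\eqref{WALOHAdef}; the contribution $\lambda\gamma$ appearing in the Poisson mean of $A_i-1$ accounts for the certain refusals during the all-blocked initial interval $[0,\gamma]$.

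For Condition~(U) I would verify a Doeblin-type minorisation. Inspection of~\eqref{WCSMAdef} and~\eqref{WALOHAdef} shows that the single-step kernel density at $(k,s)=(1,s)$ with $s\in[1,2]$ is bounded below by $\frac{\kappa-1}{\kappa}\lambda e^{-2\lambda}$, uniformly in the initial state $(a,t)$: indeed $\gamma\le 1$ always so $\1_{s\ge\gamma}=1$ on $[1,2]$, $\beta(s)\le 1$ there, and the bracket $(\gamma+B(s)/\kappa)^{k-1}$ equals $1$ when $k=1$. Iterating $\kappa-1$ such transitions---each of which by induction pushes the newest $t$-coordinate into $[1,2]$ and thereby forces $\gamma=0$ and $\beta\le 1$ at the next step---drives the state into the compact set $\{1\}^{\kappa-1}\times[1,2]^{\kappa-1}$ with a joint $(\kappa-1)$-step density uniformly bounded below there. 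This yields $W^{\kappa-1}((a,t),\cdot)\ge\eps\,\nu(\cdot)$ for some common reference probability measure $\nu$, which is Condition~(U).

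The main obstacle is the ALOHA step: justifying that $\beta(r)$ is a \emph{deterministic} function of the gap history rather than a random variable depending on past channel-choice realisations. This rests on the reservation interpretation (collisions do not free the channel) together with the distinctness of channels used by admissions within one time unit of each other, and it is precisely this fact that converts the collision mechanics into the simple thinning argument above and rules out any residual dependence of the transition on the count sequence $(a_j)$.
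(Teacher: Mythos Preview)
Your derivations of the two kernels are correct. For CSMA you argue exactly as the paper does. For ALOHA your Poisson--thinning argument is a legitimate and in fact cleaner alternative to the paper's route: the paper conditions on the number $n$ of Poisson points in $(\widetilde T_i+\gamma_i,\widetilde T_i+s)$, integrates over their ordered positions with weight $\prod_k\beta(s_k)/\kappa$, sums over $n$, and then convolves with the $\Poi_{\lambda\gamma_i}$ count of arrivals in $[0,\gamma_i]$, finally collapsing everything with the binomial theorem. Your thinning shortcut replaces all of this by one line, and your remark that $\beta(r)$ is a deterministic function of $(t_1,\dots,t_{\kappa-1})$ on the interval up to the next admission (because collisions do not free a channel, and admissions within one time unit of each other occupy distinct channels) is exactly the structural observation that makes both approaches work.

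There is, however, a genuine gap in your treatment of Condition~(U). What you establish is a Doeblin minorisation $W_*^{\kappa-1}((a,t),\cdot)\ge\eps\,\nu(\cdot)$ with $\nu$ supported on $\{1\}^{\kappa-1}\times[1,2]^{\kappa-1}$. But (U) in the paper's sense \eqref{U} is an \emph{upper} bound $P^\ell(\sigma,\cdot)\le\frac{M}{N}\sum_{i=1}^N P^i(\tau,\cdot)$, not a lower bound, and a one--sided minorisation does not imply it: take e.g.\ the chain on $\N\cup\{*\}$ that from any state jumps to $*$ with probability $\tfrac12$ and otherwise moves $n\mapsto n+1$ (and $*\mapsto 1$); this satisfies Doeblin with $\nu=\delta_*$, yet $P^\ell(n,\{n+\ell\})=2^{-\ell}>0$ while $\sum_{i\le N}P^i(*,\{n+\ell\})=0$ whenever $n+\ell>N$, so no fixed $(\ell,N,M)$ can work. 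What the paper actually proves is the two--sided comparability $W_*^{(\kappa+1)}((a,t),\cdot)\le\widetilde M\,W_*^{(\kappa+1)}((\tilde a,\tilde t),\cdot)$, obtained by sandwiching the $(\kappa+1)$-step density between constant multiples of a common reference density (products/convolutions of $f_0(s)=\lambda e^{-\lambda s}$): the upper bound uses $f_\gamma(s)\le e^{\lambda}f_0(s)$ uniformly in $\gamma\in[0,1]$, and the lower bound restricts to the event $\{\sigma_1>1\}$ (probability $\ge e^{-\lambda}$), after which all channels are idle and the next $\kappa-1$ gaps are i.i.d.\ $\Exp(\lambda)$. Your minorisation supplies the lower half of this sandwich; to close the argument you must also supply the matching uniform upper bound.
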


The proofs are in Section~\ref{sec-LDPContTimeCSMA} for the CSMA case and in Section~\ref{sec-ALOHAMarkov} for the ALOHA case.

\begin{remark}[Interpretation of the ALOHA kernel]
In the kernel $W_{\rm ALOHA}$, the parameter $\beta(s)$ plays the role of the number of busy channels at time $\widetilde T_i+s$, conditioned on the process of message arrivals before time $\widetilde T_i$. Therefore $1-\frac{\beta(s)}{\kappa}$ is the probability that a message that arrives at that time picks an idle channel. The remaining terms on the right of \eqref{WALOHAdef} express the probability that all the messages arriving during $[\widetilde T_i, \widetilde T_i+\gamma)$ and during $[\widetilde T_i+\gamma,\widetilde T_i+ s)$ pick a busy channel and are therefore not admitted to the system. We will specify this in the proof. 
\hfill$\Diamond$
\end{remark}

\begin{remark}[Markov renewal process] We see that in both cases $(\sigma_i)_{i\in\N}$ is autonomously a $(\kappa-1)$-Markov chain (with a kernel that can easily be deduced from \eqref{WCSMAdef} and \eqref{WALOHAdef}, respectively), and $A_i$ is a random function of $\sigma_{i-1},\sigma_{i-2},\dots,\sigma_{i-\kappa+1}$. More precisely, given the sequence $(\sigma_i)_{i\in\N}$, the variables $A_i$ are independent over $i$ and are Poisson-distributed with a certain parameter depending on $\sigma_{i-1},\sigma_{i-2},\dots,\sigma_{i-\kappa+1}$. Because of this Markovian structure, $(A_i,\sigma_i)_{i\in\N}$ (more precisely, $(A_i,(\sigma_{i-1},\dots,\sigma_{i-\kappa+1}))_{i\in\N}$) is often called a Markov renewal process.
\hfill$\Diamond$
\end{remark}

\begin{remark}[Deriving $S(t)$ and $A(t)$]
In the CSMA case, $t\mapsto S(t)$ is nothing but the time-inverse of the partial sum sequence of the $\sigma_i$ via the formula
\begin{equation}
S(t)=\sup\Big\{m\in\N\colon\sum_{i=1}^m \sigma_i\leq t\Big\}
\end{equation}
and can therefore be fully described in terms of $(\sigma_i)_{i\in\N}$. A similar assertion applies to $A(t)$; see  \eqref{S(t)CSMA}. However, in the ALOHA case one needs additionally $(A_i)_{i\in\N}$ for the description of $S(t)$ (see \eqref{S(t)Aloha}). Certainly, one can also describe other interesting quantities as functionals of the Markov chain, for example the number of unsuccessful messages or (in the ALOHA case) the number of messages that are admitted to some channel, but are canceled later during the service time.
\hfill$\Diamond$
\end{remark}

\subsubsection{Large deviations}\label{sec-LD} Now we turn to our results concerning the large deviations for $(A(t),S(t))$ in the limit $t\to\infty$. Our goal is to quantify the exponential decay rate of the probability of rare events of the form $\{\frac 1t(A(t),S(t))\in B\}$ for many sets $B\subset (0,\infty)^2$.

Let us recall the notion of a {\em large-deviation principle (LDP)}. Indeed, a sequence $(X_n)_{n\in\N}$ of $\Xcal$-valued random variables (where $\Xcal$ is a Polish space) is said to satisfy an LDP with lower semicontinuous rate function $I\colon \Xcal\to[0,\infty]$ if for any closed set $F\subset \Xcal$ and any open set $G\subset\Xcal$,
$$
\limsup_{n \to\infty}\frac 1n\log\P(X_n\in F)\leq -\inf_F I\qquad\mbox{and}\qquad \liminf_{n \to\infty}\frac 1n\log\P(X_n\in G)\geq -\inf_G I.
$$
The first statement is called the large-deviations upper bound (or LDP upper bound), the latter the large-deviations lower bound. These together can be very roughly summarized by saying that $\P(X_n\approx x)\approx \e^{-n I(x)}$ for any $x\in\mathcal X$ as $n\to\infty$. However, topological subtleties are always present in an LDP. See \cite{dembozeitouni} for an account on LDP theory.

Indeed, with the help of the Markov renewal process $(A_i,\sigma_i)_{i\in\N }$ we are in an excellent position to find and prove such an LDP and to identify the rate functions for the two cases; indeed there are very obvious candidates, which are based on the sequence of empirical pair measures of  $(A_i,\sigma_i)_{i\in\N }$. However, there is a problem that we could not overcome, and hence we are only able to derive the LDP upper bound. This problem does not seem to be only technical; it is the fact that $(A(t),S(t))$ is not a {\em continuous} functional of the empirical measure. This makes it impossible to use standard arguments, and we found no way around it; hence we state only the LDP upper bound below. It is not clear to us whether or not the corresponding lower bound holds as well.

The rate functions will be identified in terms of certain entropies, which are well-known in the LDP-theory for Markov chains. Indeed, we write $H(\mu\mid\nu)=\int\d \mu\log\frac{\d\mu}{\d \nu }$ for the relative entropy of a probability measure $\mu$ with respect to another one, $\nu$ (if the density exists, otherwise $H(\mu\mid\nu)=\infty$). Furthermore, for a measure $\mu$ on $\Xcal^\kappa$, we write $\mu^{\ssup{\kappa-1}}$ for the projection of $\mu$ on the vector of the first $\kappa-1$ components, and we say that $\mu$ lies in $\Mcal_1^{\ssup{\rm s}}(\Xcal^\kappa)$ if $\mu$ is a probability measure on $\Xcal^\kappa$ whose projection on the vector of the first $\kappa-1$ components is equal to its projection on the vector of the last $\kappa-1$ components. We abbreviate $\Sigma:=\N\times(0,\infty)$ and define the projections $\pi_1\colon \Sigma^\kappa \to\N$ and $\pi_2\colon \Sigma^\kappa \to (0,\infty)$ by  $\pi_1((a_1,r_1),\dots,(a_\kappa,r_\kappa))=a_\kappa$ and $\pi_2((a_1,r_1),\dots,(a_\kappa,r_\kappa))=r_\kappa$. We write $\langle f, \mu\rangle$ for the integral of a function $f$ with respect to a measure $\mu$.

Our main asymptotic large-deviation results as $t\to\infty$ are as follows.

\begin{theorem}[Large-deviation upper bound for $(A(t),S(t))$]\label{thm-LDP}
In both cases, ALOHA and CSMA, as $t\to\infty$, the pair $\frac 1t (A(t),S(t))$ satisfies an LDP upper bound on $(0,\infty)^2$, i.e.,\ for any closed set $F\subseteq (0,\infty)^2$ we have
\begin{align*}
\limsup_{t\to\infty}\frac{1}{t}\log\mathbb{P}\Big(\frac{1}{t}\big(S(t),A(t)\big)\in F\Big)\leq -\inf_F I_*,
\end{align*}
where, for $a,s\in[0,\infty)$,
\begin{equation*}
I_{\rm CSMA}(a,s)=\sup_{A\in\R,B\in(-\infty,\lambda)}\inf_{\mu\in\Mcal_1^{\ssup{\rm s}}(\Sigma^\kappa)}\big[A\left(a-s\left<\pi_1,\mu\right>\right)+B\left(1-s\left<\pi_2,\mu\right>\right)+H(\mu\mid \mu^{\ssup{\kappa-1}}\otimes W_{\rm CSMA})\big]
\end{equation*} 
for the CSMA protocol and
\begin{equation*}
\begin{aligned}
I_{\rm ALOHA}(a,s)=\sup_{A\in\R,B\in(-\infty,\lambda)}\inf_{\mu\in\Mcal_1^{\ssup{\rm s}}(\Sigma^\kappa)}\Big[A\big(a-\smfrac{a+s}{2}\left<\pi_1,\mu\right>\big)&+B\big(1-\smfrac{a+s}{2}\left<\pi_2,\mu\right>\big)\\
&+H(\mu\mid \mu^{\ssup{\kappa-1}}\otimes W_{\rm ALOHA})\Big]
\end{aligned}
\end{equation*} 
in the case of ALOHA protocol. Both rate functions have precisely one minimizer and are convex on $(0,\infty)^2$ and are good (i.e., their level sets $\{(a,s)\colon I_*(a,s)\leq C\}$ are compact for any $C$). The family $(\frac 1t(A(t),S(t)))_{t>0}$ is exponentially tight (i.e., for any $M>0$ there is a $K>0$ such that $\P(\frac 1t(A(t),S(t))\notin [0,K]^2)\leq \e^{-tM}$ for any $t$).
\end{theorem}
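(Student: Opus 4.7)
\smallskip
\noindent\emph{Proof plan.} My plan is to combine a Donsker--Varadhan level-2 LDP for the Markov renewal process with a Chernoff-type tilting that sidesteps the need for a contraction principle. The starting point is Proposition~\ref{lem-Markov}: the chain $(A_i,\sigma_i)_{i\in\N}$ is a uniformly ergodic $(\kappa-1)$-Markov chain with kernel $W_*$, and Condition~(U) is precisely what yields a full LDP for its empirical pair measure $L_n\in\Mcal_1^{\ssup{\rm s}}(\Sigma^\kappa)$ with good rate function $\mu\mapsto H(\mu\mid\mu^{\ssup{\kappa-1}}\otimes W_*)$. This is the entropy piece in both $I_{\rm CSMA}$ and $I_{\rm ALOHA}$; the parameters $A,B$ will arise as Lagrange multipliers for the two constraints tying $(a,s)$ to the $\pi_1,\pi_2$-moments of $\mu$.

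Second, I would express $(A(t),S(t))$ through partial sums of $(A_i,\sigma_i)$. In the CSMA case, $S(t)=N(t):=\sup\{m\colon\sum_{i=1}^m\sigma_i\leq t\}$ and $A(t)=\sum_{i=1}^{N(t)+1}A_i$ up to an exponentially negligible boundary term, so on the event $\{A(t)/t\approx a,\,S(t)/t\approx s\}$ one has $\langle\pi_2,L_{N(t)}\rangle\approx 1/s$ and $\langle\pi_1,L_{N(t)}\rangle\approx a/s$, which matches exactly the two constraints extracted by optimising over $A,B$ in $I_{\rm CSMA}$. The ALOHA analogue uses the expression of $S(t)$ mentioned after Proposition~\ref{lem-Markov} and produces the symmetric factor $\frac{a+s}{2}$ through an averaging of ``admission'' and ``success'' clocks.

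The heart of the upper bound is then exponential tilting rather than contraction: for any $A\in\R$, $B<\lambda$ and $\eps>0$,
\[
\P\Big(\smfrac 1t(A(t),S(t))\in N_\eps(a,s)\Big)\leq \e^{-t[Aa+Bs-\eps(|A|+|B|)]}\,\E\big[\e^{AA(t)+BS(t)}\big],
\]
and I would bound the exponential moment by invoking Varadhan's lemma against the process-level LDP, yielding
\[
\limsup_{t\to\infty}\smfrac 1t\log\E\big[\e^{AA(t)+BS(t)}\big]\leq \sup_\mu\big[\alpha_*(A,B,\mu)-H(\mu\mid\mu^{\ssup{\kappa-1}}\otimes W_*)\big],
\]
where $\alpha_*$ is the linear functional in $\mu$ dictated by the representation of $A(t),S(t)$. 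A covering of $F$ by $\eps$-balls, the minimax inequality and $\eps\downarrow 0$ via exponential tightness then deliver the variational form in the theorem; the restriction $B<\lambda$ is the integrability threshold coming from the $\e^{-\lambda s}$ decay of $W_*$. Exponential tightness is elementary since $A(t)\sim\Poi(\lambda t)$ has Cram\'er-type tails and $S(t)\leq A(t)$; convexity of $I_*$ is automatic from the supremum-of-affine representation; goodness follows from lower semicontinuity plus exponential tightness; and uniqueness of the minimiser (identifiable with the law-of-large-numbers value $(\lambda,s_*(\lambda,\kappa))$) follows from strict convexity of relative entropy on the admissible $\mu$'s together with the ergodic theorem for $L_n$.

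The hard part --- and, I suspect, the structural reason a matching lower bound is not available --- is that $(A(t),S(t))$ is \emph{not} a continuous functional of $L_{N(t)}$: the renewal stopping rule $\sum_{i=1}^m\sigma_i\leq t$ depends on the time-ordered trajectory and not merely on the pair measure, so arbitrarily small perturbations of $\mu$ can shift $N(t)$ by order $t$. The standard contraction principle therefore fails for the lower bound, but the Chernoff route above is robust: it requires only an exponential majorisation of the moment generating function, which the process-level LDP for $L_n$ supplies.
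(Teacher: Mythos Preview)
Your overall architecture --- Donsker--Varadhan level-2 LDP for $L_n^\kappa$ combined with the identification $(A(t),S(t))\leftrightarrow(\langle\pi_1,L_{N(t)}^\kappa\rangle,\langle\pi_2,L_{N(t)}^\kappa\rangle)$ --- is exactly the paper's starting point, and your observations about convexity, exponential tightness (your argument $S(t)\le A(t)\sim\Poi_{\lambda t}$ is in fact cleaner than the paper's) and the $B<\lambda$ threshold are all correct. But the central step of your plan, the Chernoff bound
\[
\P\big(\smfrac 1t(A(t),S(t))\in N_\eps(a,s)\big)\leq \e^{-t[Aa+Bs-\eps(|A|+|B|)]}\,\E\big[\e^{AA(t)+BS(t)}\big]
\]
followed by ``Varadhan's lemma against the process-level LDP'', does not go through as stated, for two reasons.

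First, the LDP you have is for $L_n^\kappa$ at \emph{deterministic} $n$, whereas $A(t)$ and $S(t)$ are functionals of $L_{N(t)}^\kappa$ at the \emph{random} renewal count $N(t)$. Varadhan's lemma gives you $\limsup_n\frac1n\log\E[\e^{nF(L_n^\kappa)}]$, not $\limsup_t\frac1t\log\E[\e^{tG(L_{N(t)}^\kappa)}]$; passing from one to the other is precisely the time-inversion problem you flag only in connection with the lower bound. The paper resolves this not by computing the exponential moment of $(A(t),S(t))$ at all, but by first proving (via G\"artner--Ellis in $n$) an LDP upper bound for the deterministic-time pair $(\langle\pi_1,L_n^\kappa\rangle,\langle\pi_2,L_n^\kappa\rangle)$ with rate function $J_*$, and only afterwards translating events $\{S(t)\ge st,\,A(t)\ge at\}$ into events of the form $\{\langle\pi_2,L_{st}^\kappa\rangle\le 1/s,\,\langle\pi_1,L_{st}^\kappa\rangle\ge a/s\}$ (CSMA) or the analogous $\frac{a+s}{2}$-scaled events (ALOHA). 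This quadrant-by-quadrant monotonicity argument, combined with convexity of $I_*$ to locate the infimum at the corner, is what replaces the random-time Varadhan step you invoke; it is also what produces the scaling factors $s$ and $\frac{a+s}{2}$ inside the rate functions, which would \emph{not} emerge from a straight Legendre transform of $\lim_t\frac1t\log\E[\e^{AA(t)+BS(t)}]$.

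Second, even at deterministic $n$, Varadhan's lemma for $\E[\e^{n(A\langle\pi_1,L_n^\kappa\rangle+B\langle\pi_2,L_n^\kappa\rangle)}]$ is not immediate, because $\mu\mapsto\langle\pi_i,\mu\rangle$ is unbounded and hence not weakly continuous. The paper handles this with two separate devices: it absorbs the $\pi_2$-tilt into the kernel via the exact identity $W_*^{\ssup{A,B,\lambda}}=W_*^{\ssup{D,0,\lambda-B}}$ (this is where the restriction $B<\lambda$ really bites), and it controls the $\pi_1$-tilt by a truncation $\pi_1\wedge m$ plus an explicit exponential Chebyshev estimate on $\sum_i(A_i-m)_+$ using the conditional Poisson structure of $A_i$ given $(\sigma_j)_j$. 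Your proposal does not account for either of these, and without them the ``$\sup_\mu[\alpha_*(A,B,\mu)-H(\mu\mid\cdot)]$'' inequality is unjustified.
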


The proof is in Section~\ref{sec-ProofLDCSMA}. By the well-known contraction principle (saying that LDPs are obtained under continuous images and gives an explicit formula for the rate function), we obtain without work:

\begin{cor}[LDP upper bound for number of successfully delivered messages]\label{cor-S(t)LDP}
As $t\to\infty$, $\frac  1tS(t)$ satisfies an LDP upper bound with rate function 
\begin{equation}\label{CSMA_S}
I^S_{\rm CSMA}(s)=\sup_{B\in(-\infty,\lambda)}\inf_{\mu\in\Mcal_1^{\ssup{\rm s}}(\Sigma^\kappa)}\left\{B\left(1-s\left<\pi_2,\mu\right>\right)+H(\mu\mid \mu^{\ssup{\kappa-1}}\otimes W_{\rm CSMA})\right\}
\end{equation}
for the CSMA protocol resp.
\begin{equation}\label{ALOHA_S}
I^S_{\rm ALOHA}(s)=\sup_{B\in(-\infty,\lambda)}\inf_{\mu\in\Mcal_1^{\ssup{\rm s}}(\Sigma^\kappa)}\left\{B\left(1-\frac{\left<\pi_2,\mu\right>s}{2-\left<\pi_1,\mu\right>}\right)+H(\mu\mid \mu^{\ssup{\kappa-1}}\otimes W_{\rm ALOHA})\right\}
\end{equation}
for the ALOHA protocol.
\end{cor}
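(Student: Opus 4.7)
The strategy is to apply the contraction principle to the joint LDP upper bound of Theorem~\ref{thm-LDP} under the continuous projection $\pi\colon(0,\infty)^2\to(0,\infty)$, $(a,s)\mapsto s$, and then to identify the resulting rate function with the formulas announced in the corollary.

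For the contraction step, any closed $F\subset(0,\infty)$ has closed preimage $\pi^{-1}(F)=(0,\infty)\times F$, and $\{\tfrac1tS(t)\in F\}=\{\tfrac1t(A(t),S(t))\in\pi^{-1}(F)\}$. Theorem~\ref{thm-LDP} thus yields the LDP upper bound for $\tfrac1tS(t)$ with rate function $\tilde I^S_*(s):=\inf_{a\in(0,\infty)}I_*(a,s)$, so the remaining task is to show that this infimum coincides with \eqref{CSMA_S} resp.~\eqref{ALOHA_S}.

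Write $I_*(a,s)=\sup_{A,B}\inf_\mu G_*(A,B,\mu,a,s)$, and note that $G_*$ is affine in $a$ in both cases. The inequality $\tilde I^S_*\ge I^S_*$ is immediate by choosing the critical slope in $a$. For CSMA the $a$-coefficient inside the bracket is just $A$, so setting $A=0$ makes the bracket independent of $a$ and yields exactly $I^S_{\rm CSMA}(s)$ as a lower bound for $I_{\rm CSMA}(a,s)$, uniformly in $a$. For ALOHA the $a$-coefficient equals $\tfrac12[A(2-\langle\pi_1,\mu\rangle)-B\langle\pi_2,\mu\rangle]$, and the substitution $A=\frac{B\langle\pi_2,\mu\rangle}{2-\langle\pi_1,\mu\rangle}$ that kills this coefficient, after a short algebraic simplification of the surviving $a$-free terms, produces precisely the bracket in \eqref{ALOHA_S}. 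For the reverse inequality I would interchange $\inf_\mu$ and $\sup_{A,B}$ via Sion's minimax theorem, which applies since $G_*$ is linear in $(A,B)$ and $\mu\mapsto H(\mu\mid\mu^{\ssup{\kappa-1}}\otimes W_*)$ is convex, lower semicontinuous, and has compact sublevel sets (the latter inherited from the goodness of $I_*$ in Theorem~\ref{thm-LDP}). After the exchange, $\sup_A$ of the linear-in-$a$ term equals $+\infty$ unless its $a$-coefficient vanishes, so the outer $\inf_a$ simply releases this into an unconstrained infimum over $\mu$, reproducing $I^S_*(s)$.

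The only genuinely delicate step is the minimax exchange, because $(A,B)$ ranges over an unbounded domain; if Sion's hypotheses prove fragile, an alternative I would try is to bypass the exchange entirely. Given any $\eps$-near optimiser $\mu^*$ in the formula for $I^S_*(s)$, plug the critical $a^*$ (namely $a^*=s\langle\pi_1,\mu^*\rangle$ for CSMA and the analogous relation from the ALOHA calculation above) into $I_*(\cdot,s)$. With this choice the $A$-term evaluated at $\mu=\mu^*$ vanishes identically in $A$, so $\inf_\mu G_*\le G_*|_{\mu=\mu^*}$ is bounded uniformly in $A$, giving $I_*(a^*,s)\le I^S_*(s)+\eps$ and completing the identification.
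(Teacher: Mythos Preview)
Your contraction step is exactly what the paper does, and in fact it is \emph{all} the paper does: its proof consists of the single sentence ``immediate from the contraction principle, noting that the canonical projection \ldots\ is continuous.'' The paper does not carry out the identification of $\inf_a I_*(a,s)$ with the displayed formulas \eqref{CSMA_S}--\eqref{ALOHA_S}; that identification is simply asserted. So as far as matching the paper goes, you are done after your first paragraph.

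Your additional identification work, however, has a real gap in the ALOHA case. For the inequality $\tilde I^S_{\rm ALOHA}\ge I^S_{\rm ALOHA}$ you propose the substitution $A=\frac{B\langle\pi_2,\mu\rangle}{2-\langle\pi_1,\mu\rangle}$, but this $A$ depends on $\mu$, while the quantifier order in $I_{\rm ALOHA}(a,s)=\sup_{A,B}\inf_\mu[\cdots]$ forces $A$ to be chosen \emph{before} $\mu$. Unlike the CSMA case (where the $a$-coefficient is simply $A$, so $A=0$ is a legitimate $\mu$-free choice), here there is no single $A$ that kills the $a$-coefficient uniformly in $\mu$, so this step does not go through as written.

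Your fallback arguments are also not airtight. For Sion, you invoke compactness of sublevel sets of $H$ ``inherited from the goodness of $I_*$'', but goodness of $I_*$ is a statement about compact level sets in $(a,s)$-space, not in $\mu$-space; and even granting compact level sets of $\mu\mapsto H(\mu\mid\cdot)$, Sion's theorem with an unbounded $(A,B)$-domain needs more than you have stated. For the direct alternative, picking a near-optimiser $\mu^*$ of $\inf_\mu$ at the optimal $B^*$ and setting $a^*=s\langle\pi_1,\mu^*\rangle$ does give $\inf_\mu G(A,B,\cdot)\le G(A,B,\mu^*)=B(1-s\langle\pi_2,\mu^*\rangle)+H(\mu^*)$ for every $(A,B)$; but then you still have to take $\sup_B$ of the right-hand side, and this sup (with $B$ ranging over $(-\infty,\lambda)$) is in general strictly larger than the value $B^*(1-s\langle\pi_2,\mu^*\rangle)+H(\mu^*)\approx I^S_*(s)$ that you want. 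So the bound does not close. A clean route for CSMA is to use that $J_*=\Lambda^*$ is a Legendre transform and that $\inf_x\Lambda^*(x,y)=\sup_B[By-\Lambda(0,B)]$ is a standard partial-conjugate identity; for ALOHA the analogous reduction via the change of variables $u=\tfrac{a+s}{2}$ is more delicate and does not obviously collapse to \eqref{ALOHA_S} without further work.
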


The proof of Corollary~\ref{cor-S(t)LDP} is immediate from the contraction principle, noting that the canonical projection $\N\times(0,\infty)\to\N$ is continuous.

\begin{remark}[Expected throughput and rate function]
Let us mention that the expected throughput $s_*(\lambda, \kappa)
$ that we identified in Lemma~\ref{lem-expval} can also be characterized in a standard way as the minimizer of the rate function $I^S_*$ for $*\in\{{\rm ALOHA, CSMA}\}$. However, it is rather difficult to identify it from this reasoning since we have no closed formula for the invariant distribution of the Markov chain $(A_i,\sigma_i)_{i\in\N}$, such that we do not have any results in this respect.
\hfill$\Diamond$
\end{remark}

\begin{remark}[Contracting to $A(t)$] Instead of contracting the pair $(A(t),S(t))$ to $S(t)$, we could do this also with $A(t)$ and obtain an analogue of Corollary~\ref{cor-S(t)LDP}. However, since $A(t)$ is nothing but the PPP, one can derive an LDP for $\frac 1t A(t)$ also with much simpler means and obtains the rate function $I^A_{*}(a)=\lambda-a+a\log \frac{a}{\lambda}$.
\hfill$\Diamond$
\end{remark}

\begin{remark}[An application] We think that exponential estimates for the probabilities of rare events like in Corollary~\ref{cor-S(t)LDP} are very relevant for the understanding of the strengths and shortcomes of such telecommunication systems. Indeed, they give us not only extremely good estimates for such probabilities, but also an analytic starting point for getting more information about the most likely situation that governs the rare event: the variational formula for the rate function.

As an example, the probability of the event $\{S(t)\leq t [s_{*}(\lambda,\kappa)-\eps ]\}$ that the number of successfully delivered messages in the time interval $[0,t]$ is not larger than $t [s_{*}(\lambda,\kappa)-\eps]$ is upper bounded by $\exp\{-t \inf_{s\in[0,s_{*}(\lambda,\kappa)-\eps]}I_*^S(s)\}$ as $t\to\infty$, for any $\eps>0$. This means that this probability decays exponentially fast with rate at least $\inf_{[0,s_{*}(\lambda,\kappa)-\eps]}I_*^S=I_*^S(s_{*}(\lambda,\kappa)-\eps)>0$. 
\hfill$\Diamond$
\end{remark}

\begin{remark}[Why not a full LDP?]\label{rem-notfullLDP}
It would be rather desirable to have a full LDP for $(A(t),S(t))$ with rate function $I_*$, but there is an obstacle that we could not overcome: the lack of continuity of the map $\mu\mapsto \langle \pi_2,\mu\rangle$, since $\pi_2$ is unbounded. This makes an application of the crucial G\"artner--Ellis theorem impossible, since it prevents us from proving that $I_*$ is strictly convex, and therefore from proving that its Legendre transform is differentiable. It also prevents us from using the contraction principle, since $(A(t),S(t))$ is not a continuous functional of the empirical measures of $(A_i,\sigma_i)_{i\in\N}$ (see Section~\ref{sec-ProofLDCSMA}). With a lot of more work, we would be able to derive LDP lower bounds that are severely restricted, and the restriction would not be easy to understand, so we abstained from formulating any lower bound.
\hfill$\Diamond$
\end{remark}

\subsection{Related literature} \label{sec-literature}

In \cite{erlang} and \cite{kendall} the CSMA model in continuous time is modeled with the help of a queue that expresses the number of messages that are present in the $\kappa$ channels as a function of the time parameter. Both restricted to the case of exponential distributed service times, which lead to a Markov model, whose invariant distribution can be calculated easily and explicitly. More general results involving arbitrary service time distributions can be found in \cite{borovkov}. Here the process $(Q_n)_{n\in\N}$ of the numbers $Q_n$ of messages in the channels at the time of the arrival of the $n$-th message is considered and its limiting distribution is calculated explicitly depending on the arrival and service time distributions. This and additional ad-hoc methods make it possible to obtain information about the number of successes at a late time, e.g., a law of large numbers. Unfortunately, the process $(Q_n)_{n\in\N}$ does not have the Markov property, but an infinitely long memory. Hence, probabilistic formulas could not be derived, and large deviations for  the throughput of the system could  not be considered. 

In \cite{MAP} one finds the throughput of the single-channel continuous time CSMA, which is $\frac{\lambda}{\lambda+1}$
and coincides with our formula for $s_{\rm CSMA}(\lambda,1)$ in the case $\kappa=1$. Another version of CSMA, namely {\em slotted (single channel) CSMA}, has been studied more intensively than the continuous time model (see \cite{MAP}, \cite{CISS},  \cite{wang} and \cite{lakatos}), and provides also the same limiting throughput as the continuous time model in the single channel case. Let us mention that an analogous large-deviation analysis of multi-channel discrete-time versions of ALOHA and slotted ALOHA and CSMA is carried out in \cite{KK22}.

Since \cite{abramson}, the single-channel pure ALOHA has been studied intensely (for a general overview see \cite{lakatos}, \cite{MAP} and \cite{throughputevaluation}). The throughput is identified there as $\lambda \e^{-2\lambda}$, which also coincides with our result for $s_{\rm ALOHA}(\lambda,1)$ in the special case $\kappa=1$. In \cite{performance}, \cite{lakatos}, \cite{MAP} and \cite{throughputevaluation} one can also read about another, more popular and better known, single channel version of ALOHA, namely the {\em slotted ALOHA}, with the higher throughput $\lambda \e^{-\lambda}$. The multichannel case of this model has also been studied, e.g., in \cite{shen}, where the throughput $\lambda \e^{-\frac{\lambda}{\kappa}}$ has been calculated. See \cite{KK22} for a derivation of this value via a large-deviation analysis with explicit rate functions. To the best of our knowledge, there are no similar results for the multichannel model in continuous time in the literature yet, hence we think that our Lemma~\ref{lem-expval} is novel.

\section{Expectation of the throughput}\label{sec-ExpThrough}

Let us  derive formulas for the expected throughput in the two protocols, i.e., formulas for the expectation of the large-$t$ limit of $\frac 1t S(t)$. In Section~\ref{sec-expectationCSMA} and Section~\ref{sec-expectationALOHA}, respectively, we consider CSMA and ALOHA. This section has nothing to do with the Markov chains introduced in Section~\ref{sec-MC}.

\subsection{CSMA}\label{sec-expectationCSMA}

We borrow some knowledge that was gained in \cite{borovkov}. We pointed out in Section~\ref{sec-literature} that the CSMA system was analysed in \cite{borovkov} with the help of a stochastic process $Q=(Q_n)_{n\in\mathbb{N}}$, where $Q_n$ denotes the number of messages in the $\kappa$ channels (the number of busy channels) at the arrival time of the $n$-th message. The limiting distribution $\nu_Q$ of $Q_n$, as $n$ goes to infinity, has been calculated there as
\begin{align*}
\nu_Q(i)=\frac{\frac{\lambda^i}{i!}}{\sum_{k=0}^{\kappa}\frac{\lambda^k}{k!}}= \Poi_\lambda|_{[0,\kappa]}(i), \qquad i\in\{0,1,\dots,\kappa\},
\end{align*}
where we wrote $\Poi_\lambda|_{[0,\kappa]}$ for the Poisson distribution with parameter $\lambda$, conditioned on being $\leq \kappa$. It was also proved there that the limiting distribution of $Q_n$ as $n\to\infty$ coincides with the limiting distribution of the number of busy channels at a deterministic time $t$ as $t\to\infty$. Using this result, we obtain
\begin{align}\label{formelcsma}
s_{\rm CSMA}(\lambda,\kappa)=\lambda \frac{\sum_{n=0}^{\kappa-1}\frac{\lambda^n}{n!}}{\sum_{n=0}^{\kappa}\frac{\lambda^n}{n!}}=\lambda \frac{\Poi_\lambda([0,\kappa-1])}{\Poi_\lambda([0,\kappa])},
\end{align}
since the average number of the number of successes is equal to the arrival rate $\lambda$, multiplied by the success probability, which is given by the second factor, as every new arriving message can only be delivered successfully, if there are at most $\kappa-1$ busy channels. 

In \eqref{formelcsma} one sees that $s_{\rm CSMA}(\lambda,\kappa)$ is increasing in $\lambda$ and converges to $\kappa$ as $\lambda\to\infty$, which is intuitively clear, because most of the channels are likely to be busy if the arrival rate is high. Therefore, there is no interesting optimisation task over $\lambda$, since the throughput get always better if the density of message is increased. Taking into account also the number of unsuccessful messages (which explodes as $\lambda\to\infty$) makes this issue more interesting, but we do not strive on this here. 

Let us finally mention that in the special case $\kappa=1$ our expression \eqref{formelcsma} yields $s_{\rm CSMA}(\lambda,1)=\frac{\lambda}{\lambda+1}$, which was formerly known (see e.g., \cite{MAP}).

\subsection{ALOHA}\label{sec-expectationALOHA}

Let us calculate the expected limiting throughput in the ALOHA protocol by hand. The expression that we obtain is good enough for also finding the optimal value of the density $\lambda$, at least for large $\kappa$.

Since we are looking at the limit of late times, we need to analyse the ALOHA process in equilibrium. This can be realised by extending the PPP from $[0,\infty)$ to $\R$ and to consider its Palm measure given that one message arrives at time $0$. Write $A([a,b))$ for the number of incoming messages during the time interval $[a,b)$. Fortunately, the number of busy channels at time $0$ depends only on the PPP during the time interval $[-1,0)$, i.e., on $A([-1,0))$. Indeed, the probability of having at least one available channel and taking one of those is equal to
\begin{equation}\label{probincoming}
\sum_{n=0}^{\kappa-1}\mathbb{P}\left(A([-1,0))=n\right)\cdot\left(1-\frac{n}{\kappa}\right)=\sum_{n=0}^{\kappa-1}\frac{\lambda^n}{n!}\e^{-\lambda}\frac{\kappa-n}{\kappa}.
\end{equation}
Note that the PPP has the property that $A([-1,0))$ does not depend on the incoming messages after time $0$; these are the only ones that might influence the transmission success of the message that arrived at time $0$. Hence, for the success probability we have to multiply the term in \eqref{probincoming} with the probability that the message that arrived at time $0$ does not get destroyed afterwards during its service time $(0,1]$, which is equal to
\begin{align*}
\sum_{n=0}^{\infty}\mathbb{P}\left(A((0,1])=n\right)\left(\frac{\kappa-1}{\kappa}\right)^n=&\sum_{n=0}^{\infty}\frac{\lambda^n}{n!}\e^{-\lambda}\left(\frac{\kappa-1}{\kappa}\right)^n=\e^{\lambda\left(\frac{\kappa-1}{\kappa}\right)-\lambda}=\e^{-\frac{\lambda}{\kappa}}.
\end{align*}
Hence, 
\begin{equation}\label{ThroughputALOHA}
s_{\rm ALOHA}(\lambda,\kappa)=\lambda \e^{-\frac{\kappa+1}{\kappa}\lambda}\sum_{n=0}^{\kappa-1}\frac{\lambda^n}{n!}\frac{\kappa-n}{\kappa}
=\lambda \e^{-\frac \lambda\kappa}\e^{-\lambda}\Big[\sum_{n=0}^{\kappa-1}\frac{\lambda^n}{n!}-\frac{\lambda}{\kappa}\sum_{n=0}^{\kappa-2}\frac{\lambda^n}{n!}\Big].
\end{equation}
In the case $\kappa=1$ this is equal to $\lambda \e^{-2\lambda}$, which was already known; see Section~\ref{sec-literature}. This is optimized at $\lambda=\frac 12$ with value $s_{1/2,1}=\frac 1{2\e}\approx \,.18$.

\section{Markov approach}\label{sec-LDPContTime}

\noindent In this section, we introduce suitable Markov chains for both protocols, CSMA and ALOHA, that  are able to describe the number of successful and unsuccessful sending attempts by time $t$. Again, we keep $\lambda\in(0,\infty)$ and $\kappa\in\N$ fixed.

\subsection{Markov approach to  CSMA}\label{sec-LDPContTimeCSMA}

Let us model the CSMA protocol  in terms of a stochastic process in discrete time. Recall that $0<T_1<T_2<T_3<\dots$ denotes all the times at which a message comes in and asks for being admitted to one of the $\kappa$ channels. According to our assumptions, $(T_i)_{i\in\N}$ is a standard Poisson point process (PPP) in $[0,\infty)$ with parameter $\lambda$, and we denote $\tau_i=T_i-T_{i-1}$. It is convenient to introduce the counting process $\Ncal$ defined by $\Ncal(I)=\#\{i\in\N\colon T_i\in I\}$ (the number of sending attempts during the time interval $I$)  for intervals $I$. Then $A(t)=\Ncal([0,t])$ is the number of attempts by time $t$.

By $(\widetilde T_i)_{i\in\N}$ we denote the subsequence $(T_{k(i)})_{i\in\N}$ of $(T_i)_{i\in\N}$ of all those times $T_j$ at which the incoming message is admitted to a channel (i.e., at which not all the $\kappa$ channels are busy); then the delivery takes place during the time interval $[T_{k(i)},T_{k(i)}+1]$, and at time $T_{k(i)}+1$ the message is successfully delivered. We introduce the counting process $\Ncal^{\ssup{\rm s}}$ defined by $\Ncal^{\ssup{\rm s}}(I)=\#\{j\in\N\colon \widetilde T_j\in I\}$ for measurable sets $I\subset[0,\infty)$. Then $S(t)=\Ncal^{\ssup{\rm s}}([0,t])$ is the number of successfully delivered messages during the time interval $[0,t]$. We put $\sigma_i=\widetilde T_i-\widetilde T_{i-1}$, and we register the number 
$$
A_i=\#\{\text{attempts during}\; (\widetilde T_{i-1},\widetilde T_{i}]\}=\sum_{j\in\N}\1\{T_j\in(\widetilde T_{i-1},\widetilde T_{i}]\}=\Ncal\big((\widetilde T_{i-1},\widetilde T_{i}]\big)\in\N
$$
of sending attempts in the time interval $(\widetilde T_{i-1},\widetilde T_{i}]$. Then we have, for any $k\in\N$ and $t\in[0,\infty)$,
\begin{equation}\label{Description}
\mbox{on the event }\{\widetilde T_k\leq t<\widetilde T_{k+1}\},\qquad S(t)=k\quad\mbox{and}\quad\sum_{i=1}^k A_i\leq A(t)<\sum_{i=1}^{k+1} A_i.
\end{equation}
Hence, we are able to express the main quantities, $A(t)$ and $S(t)$, in terms of the sequence $(A_i,\sigma_i)_{i\in\N}$ whose state space is equal to 
$$
\Sigma=\N\times (0,\infty).
$$ 
Therefore, we want to describe its distribution. It turns out that it is in general not a Markov chain, but a $(\kappa-1)$-Markov chain, i.e., a stochastic process with a memory of length $\leq \kappa-1 $:

\begin{lemma}[Markovian structure of $(A_i,\sigma_i)_{i\in\N}$ in CSMA case]\label{lem-MarkovCSMA}
The sequence $(A_i,\sigma_i)_{i\in\N}$ is a  time-homogeneous $(\kappa-1)$-Markov chain with kernel $W_{\rm CSMA}$ from $\Sigma^{\kappa-1}$ to $\Sigma$ defined by
\begin{equation*}
W_{\rm CSMA}\big(\big((a_1,t_1),\cdots,(a_{\kappa-1},t_{\kappa-1})\big), (k,\d s)\big)=\frac{(\lambda\gamma)^{k-1}}{(k-1)!}\lambda \e^{-\lambda s}\1_{[\gamma,\infty)}(s)\,\d s,
\end{equation*}
where $\gamma=[1-\sum_{k=1}^{\kappa-1}t_k]_+$.
\end{lemma}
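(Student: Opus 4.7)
The plan is to reduce everything to a direct computation exploiting the strong Markov property of the PPP at the admission times $\widetilde T_i$. The key geometric observation is that a channel is busy at time $t$ iff its admission time lies in $(t-1,t]$, so the occupancy at time $\widetilde T_{i-1}$ is encoded entirely in the most recent admissions. Because message $i-1$ was itself admitted, strictly fewer than $\kappa$ channels were busy just before $\widetilde T_{i-1}$, which forces $\widetilde T_{i-\kappa-1}\le \widetilde T_{i-1}-1$; hence only $\widetilde T_{i-2},\ldots,\widetilde T_{i-\kappa}$ can possibly still occupy a channel at time $\widetilde T_{i-1}^+$. Consequently, the earliest time $\ge \widetilde T_{i-1}$ at which a channel becomes free is $\max(\widetilde T_{i-1},\widetilde T_{i-\kappa}+1)=\widetilde T_{i-1}+\gamma_i$, with
\begin{equation*}
\gamma_i=\Big[1-\sum_{j=1}^{\kappa-1}\sigma_{i-j}\Big]_+,
\end{equation*}
which matches the $\gamma$ appearing in the statement after identifying the kernel's argument $(t_1,\ldots,t_{\kappa-1})$ with $(\sigma_{i-\kappa+1},\ldots,\sigma_{i-1})$. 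Crucially, $\gamma_i$ is a function of the last $\kappa-1$ inter-admission gaps only, not of the $A$-components or of any earlier $\sigma$'s.

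Next I would use the strong Markov property of the PPP at the stopping time $\widetilde T_{i-1}$: conditionally on $\mathcal F_{\widetilde T_{i-1}}$, the arrivals in $(\widetilde T_{i-1},\infty)$ form an independent PPP of rate $\lambda$. Since $\gamma_i\in\mathcal F_{\widetilde T_{i-1}}$, I split this shifted PPP into its restrictions to $(\widetilde T_{i-1},\widetilde T_{i-1}+\gamma_i]$ and to $(\widetilde T_{i-1}+\gamma_i,\infty)$, which are independent. Every arrival in the first sub-interval is refused (all $\kappa$ channels are busy), so the count $A_i-1$ of refused messages has distribution $\Poi(\lambda\gamma_i)$; the next admission $\widetilde T_i$ coincides with the first arrival in the second sub-interval, so $\sigma_i-\gamma_i\sim\Exp(\lambda)$, independent of $A_i-1$.

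Combining these two factors, the conditional joint law of $(A_i,\sigma_i)$ given $\mathcal F_{\widetilde T_{i-1}}$ becomes
\begin{equation*}
\P\big(A_i=k,\,\sigma_i\in\d s\,\big|\,\mathcal F_{\widetilde T_{i-1}}\big)=\frac{(\lambda\gamma_i)^{k-1}}{(k-1)!}\,\e^{-\lambda\gamma_i}\cdot\lambda\,\e^{-\lambda(s-\gamma_i)}\1_{[\gamma_i,\infty)}(s)\,\d s=\frac{(\lambda\gamma_i)^{k-1}}{(k-1)!}\,\lambda\,\e^{-\lambda s}\1_{[\gamma_i,\infty)}(s)\,\d s,
\end{equation*}
which is precisely the claimed kernel $W_{\rm CSMA}$ evaluated at $\big((A_{i-\kappa+1},\sigma_{i-\kappa+1}),\ldots,(A_{i-1},\sigma_{i-1})\big)$. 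Since this conditional law depends on the past only through $(\sigma_{i-\kappa+1},\ldots,\sigma_{i-1})$, the sequence $(A_i,\sigma_i)_{i\in\N}$ is a time-homogeneous $(\kappa-1)$-Markov chain with kernel $W_{\rm CSMA}$.

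The main obstacle is the structural step $\widetilde T_{i-\kappa-1}\le \widetilde T_{i-1}-1$: it encodes the fact that the $\kappa$-channel capacity was not violated at any earlier admission, and it needs a short inductive bookkeeping argument on $i$ together with a convention for the first $\kappa-1$ admission times (e.g.\ setting $\widetilde T_{-j}=-\infty$ for $j\ge 0$, or starting the chain after an initial transient). Once this geometric fact is in place, the rest is a one-line application of the strong Markov/memoryless property of the PPP.
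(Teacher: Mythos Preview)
Your proposal is correct and follows essentially the same route as the paper's proof: both identify $\gamma_i$ as the waiting time until the oldest of the last $\kappa$ admissions expires, then use the memoryless/strong Markov property of the PPP to read off the Poisson count on $(\widetilde T_{i-1},\widetilde T_{i-1}+\gamma_i]$ and the exponential overshoot. Your argument is in fact slightly more explicit than the paper's in two respects---you spell out the structural bound $\widetilde T_{i-\kappa-1}\le \widetilde T_{i-1}-1$ (which the paper uses implicitly) and you invoke the strong Markov property at the stopping time $\widetilde T_{i-1}$ by name---but the underlying mechanism is identical.
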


\begin{proof}
Let us fix $i\in\N$ and identify the conditional distribution of $(A_{i+1},\sigma_{i+1})$ given $(A_j,\sigma_j)_{j\leq i}$. This will turn out to be the same as the conditional distribution given the $(\kappa-1)$-past $(A_j,\sigma_j)_{j\in\{i-\kappa+2,\dots,i\}}$, and $W_{\rm CSMA}$ will turn out to be a version of this conditional distribution; this will finish the proof of the lemma.

Conditioning on $(A_j,\sigma_j)_{j\leq i}$ includes conditioning on $(\widetilde T_j)_{j\leq i}=(T_{k(j)})_{j\leq i}$. The next arrival time $T_j$ with an idle channel after time $\widetilde T_i$ is the first $T_j$ after $\widetilde T_i$ such that no more than $\kappa-1$ messages are in the $\kappa$ channels at this time. Since all the messages that are currently in the system have arrived in the last time unit before, we can say that this next $T_j$ is the first $T_j$ after $\widetilde T_i$ such that in the time interval $(T_j-1,T_j)$ the number of the $\widetilde T_k$ is smaller than $\kappa$. In formulas,
$$
\begin{aligned}
\widetilde T_{i+1}=T_{k(i+1)}&=\inf\{T_j\colon j>k(i), \Ncal^{\ssup{\rm s}}((T_j-1,T_j))<\kappa\}\\
&=\inf\{T_j\colon j >k(i), T_j-\widetilde T_{i-\kappa+1}>1\}.
\end{aligned}
$$
In terms of the time differences, we see that $\sigma_{i+1}=\tau_{k(i)+1}+\tau_{k(i)+2}+\dots+\tau_{k(i+1)}$ with 
$$
k(i+1)=\inf\Big\{j>k(i)\colon \tau_{k(i)+1}+\tau_{k(i)+2}+\dots+\tau_{j} >\gamma_i\Big\}, \quad\mbox{where }\gamma_i=\Big[1-\sum_{k=0}^{\kappa-2}\sigma_{i-k}\Big]_+.
$$

In other words, given $(\sigma_j)_{j\leq i}$, the conditional distribution of $\sigma_{i+1}$ is equal to the first point of a PPP$(\lambda)$ after time $\gamma_i$. Using the well-known memoryless property of the PPP, we see that this distribution is the distribution of $\gamma_i+X$, where $X$ is an independent Exp$(\lambda)$-distributed random variable. This distribution has the density $s\mapsto \lambda \e^{-(s-\gamma_i)\lambda}\1_{[\gamma_i,\infty)}(s)$.

We also see that $A_{i+1}-1$, the number of unsuccessful sending attempts in the time interval $(\widetilde T_i,\widetilde T_{i+1}]$, has the conditional distribution equal to the one of  $\Ncal((0, \gamma_i])$, the Poisson distribution with parameter $\lambda \gamma_i$. Hence, the conditional probability that $A_{i+1}=k$ is equal to $\Poi_{\lambda \gamma_i}(k-1)=\e^{-\lambda\gamma_i}\frac{(\lambda\gamma_i)^{k-1}}{(k-1)!}$.

Summarizing, we see that $(A_{i+1}, \sigma_{i+1})$ given $(A_j,\sigma_j)_{j\leq i}$ depends only on the $(\kappa-1)$-past, i.e., on $(A_j,\sigma_j)_{j\in\{i-\kappa+2,\dots,i\}}$ and that
\begin{equation}
\begin{aligned}
\mathbb{P}&\big((A_{i+1},\sigma_{i+1})\in\d(k, s)\mid (A_i, \sigma_i)=(a_1,t_1),\cdots,(A_{i-\kappa+2}, \sigma_{i-\kappa+2})=(a_{\kappa-1},t_{\kappa-1})\big)
\\
&=\frac{(\lambda\gamma)^{k-1}}{(k-1)!}\lambda \e^{-\lambda s}\1_{[\gamma,\infty)}(s)\,\d s,
\end{aligned}
\end{equation}
where $\gamma=[1-\sum_{k=1}^{\kappa-1}t_k]_+$. Hence, $(A_i,\sigma_i)_{i\in\N}$ is a $(\kappa-1)$-Markov chain with the kernel given in \eqref{WCSMAdef}.
\end{proof}

The fact that $(A_i,\sigma_i)_{i\in\N}$ is a $(\kappa-1)$-Markov chain can obviously also be rephrased in terms of the sequence of subsequent $(\kappa-1)$-vectors:

\begin{cor}\label{cor-MCCSMA}
Equivalently, one can formulate Lemma~\ref{lem-MarkovCSMA} by saying that the vectors 
\begin{equation}\label{Rdef}
R_i^{\ssup{\rm CSMA}}:=\big((A_i,\sigma_i),\cdots, (A_{i+\kappa-2}, \sigma_{i+\kappa-2})\big)\in \Sigma^{\kappa-1}
\end{equation}
form a time-homogeneous Markov chain $(R_i^{\ssup{\rm CSMA}})_{i\in\mathbb{N}}$ on the state space $\Sigma^{\kappa-1}$ with the transition kernel $P_{\rm CSMA}$ defined by
\begin{equation}\label{PCSMAdef}
\begin{aligned}
P_{\rm CSMA}\big(\big((&a_1,t_1),\cdots,(a_{\kappa-1},t_{\kappa-1})\big), \d\big((b_1,s_1),\cdots,(b_{\kappa-1},s_{\kappa-1})\big)\big)
\\
&=\bigotimes_{i=1}^{\kappa-2}\delta_{(a_{i+1},t_{i+1})}(\d(b_i,s_i))
\,\otimes W_{\rm CSMA}\big(\big((a_1,t_1),\cdots,(a_{\kappa-1},t_{\kappa-1})\big), \d(b_{\kappa-1},s_{\kappa-1})\big).
\end{aligned}
\end{equation}
\end{cor}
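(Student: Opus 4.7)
The plan is to verify the Markov property and then identify the kernel directly by unwinding the definition of $R_i^{\ssup{\rm CSMA}}$, using Lemma~\ref{lem-MarkovCSMA} as the substantive input. The reformulation is essentially bookkeeping: a $(\kappa-1)$-step Markov chain on $\Sigma$ is always equivalent to an ordinary Markov chain on $\Sigma^{\kappa-1}$ by taking sliding windows, and the resulting kernel must combine a deterministic shift with the one-step transition rule of the original chain.

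First I would check the Markov property. Fix $i\in\N$ and condition on $R_1^{\ssup{\rm CSMA}},\dots,R_i^{\ssup{\rm CSMA}}$. This conditioning is equivalent to conditioning on the full past $(A_j,\sigma_j)_{j\le i+\kappa-2}$, since each $R_k^{\ssup{\rm CSMA}}$ is a measurable function of this past and the whole past is recovered from $R_1^{\ssup{\rm CSMA}},\dots,R_i^{\ssup{\rm CSMA}}$. By Lemma~\ref{lem-MarkovCSMA}, the conditional distribution of $(A_{i+\kappa-1},\sigma_{i+\kappa-1})$ given this past depends only on the $(\kappa-1)$-tuple $(A_{i+1},\sigma_{i+1}),\dots,(A_{i+\kappa-2},\sigma_{i+\kappa-2})$, i.e.\ only on $R_i^{\ssup{\rm CSMA}}$, and equals $W_{\rm CSMA}\big(R_i^{\ssup{\rm CSMA}},\,\cdot\,\big)$. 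This proves that $(R_i^{\ssup{\rm CSMA}})_{i\in\N}$ is Markov and time-homogeneous.

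Next I would identify the kernel. By construction,
\begin{equation*}
R_{i+1}^{\ssup{\rm CSMA}}=\big((A_{i+1},\sigma_{i+1}),\dots,(A_{i+\kappa-2},\sigma_{i+\kappa-2}),(A_{i+\kappa-1},\sigma_{i+\kappa-1})\big),
\end{equation*}
so the first $\kappa-2$ coordinates of $R_{i+1}^{\ssup{\rm CSMA}}$ are precisely the last $\kappa-2$ coordinates of $R_i^{\ssup{\rm CSMA}}$, a deterministic shift. Writing $R_i^{\ssup{\rm CSMA}}=((a_1,t_1),\dots,(a_{\kappa-1},t_{\kappa-1}))$ and $R_{i+1}^{\ssup{\rm CSMA}}=((b_1,s_1),\dots,(b_{\kappa-1},s_{\kappa-1}))$, this shift forces $(b_j,s_j)=(a_{j+1},t_{j+1})$ for $j=1,\dots,\kappa-2$, which is encoded exactly by the product of Dirac masses $\bigotimes_{j=1}^{\kappa-2}\delta_{(a_{j+1},t_{j+1})}(\d(b_j,s_j))$. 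The new coordinate $(b_{\kappa-1},s_{\kappa-1})$ is sampled, conditionally independently of everything else, from $W_{\rm CSMA}(((a_1,t_1),\dots,(a_{\kappa-1},t_{\kappa-1})),\,\cdot\,)$ by the previous paragraph. Combining these two observations into a single conditional distribution yields exactly the kernel $P_{\rm CSMA}$ displayed in \eqref{PCSMAdef}.

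I do not expect a real obstacle here: the whole statement is a direct corollary, and the only minor care point is observing that $\sigma\bigl(R_1^{\ssup{\rm CSMA}},\dots,R_i^{\ssup{\rm CSMA}}\bigr)=\sigma\bigl((A_j,\sigma_j)_{j\le i+\kappa-2}\bigr)$, so that the $(\kappa-1)$-Markov property from Lemma~\ref{lem-MarkovCSMA} can be invoked without loss when reducing to the conditional law given $R_i^{\ssup{\rm CSMA}}$ alone.
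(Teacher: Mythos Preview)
Your proposal is correct and follows exactly the standard sliding-window argument that the paper has in mind; in fact the paper gives no proof at all for this corollary, treating it as an obvious reformulation, so your write-up simply makes explicit what the authors leave implicit. One small indexing slip: when you say the conditional law of $(A_{i+\kappa-1},\sigma_{i+\kappa-1})$ depends only on ``the $(\kappa-1)$-tuple $(A_{i+1},\sigma_{i+1}),\dots,(A_{i+\kappa-2},\sigma_{i+\kappa-2})$'', that list has only $\kappa-2$ entries; it should start at $(A_i,\sigma_i)$, which is consistent with your next clause ``i.e.\ only on $R_i^{\ssup{\rm CSMA}}$''.
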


In Section~\ref{sec-ProofLDCSMA} we will need the following strong ergodicity property of the Markov chain $(R_i^{\ssup{\rm CSMA}})_{i\in\N}$. By $P^i$ we denote the $i$-th power of $P$ (in the sense of \lq matrix\rq\ multiplication), i.e., the $i$-step transition kernel, for $i\in\N$.

\medskip\noindent{\bf Condition (U).}{\em We say, a Markov chain in a Polish space $\Sigma$ with transition kernel $P$ satisfies (U) if there exist $\ell,N\in\N$ satisfying $\ell\leq N$ and a constant $M\in[1,\infty)$ such that}
\begin{equation}\label{U}
P^\ell(\sigma,\cdot)\leq \frac MN\sum_{i=1}^N P^i(\tau,\cdot),\qquad\sigma,\tau\in\Sigma.
\end{equation}
\medskip
Condition (U) is a kind of uniform ergodicity property. It implies that the Markov chain has a unique invariant measure \cite[Ex.~6.5.8]{dembozeitouni}.

\begin{lemma}[Uniform ergodicity of $(R_i^{\ssup{\rm CSMA}})_{i\in\N}$]\label{lem-UCSMA}
For any $\lambda\in(0,\infty)$ and $\kappa\in\N$, the Markov chain $(R_i^{\ssup{\rm CSMA}})_{i\in\N}$ introduced in  Corollary~\ref{cor-MCCSMA} satisfies (U).
\end{lemma}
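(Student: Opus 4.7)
My proof verifies Condition~(U) for the chain $(R_i^{\ssup{\rm CSMA}})_{i\in\N}$ by first computing the $\ell$-step transition density explicitly, then establishing a Doeblin-type minorization on a regeneration set, and finally combining these with a refined upper bound to derive (U).

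\textbf{Explicit density and minorization.} I would take $\ell=\kappa-1$, the number of steps after which $P_{\rm CSMA}$ completely refreshes the state. Iterating the formula in Corollary~\ref{cor-MCCSMA}, the $\ell$-step density factorises as
\begin{equation*}
p_\ell(\sigma,\underline y)=\lambda^{\kappa-1}e^{-\lambda\sum_j s_j}\prod_{j=1}^{\kappa-1}\frac{(\lambda\gamma_{j-1})^{k_j-1}}{(k_j-1)!}\,\1_{\{s_j\geq\gamma_{j-1}\}},
\end{equation*}
where $\gamma_{j-1}=[1-\sum_{i=j}^{\kappa-1}\sigma_{x_i}-\sum_{i=1}^{j-1}s_i]_+\in[0,1]$ depends on the tail of $\sigma$ and on the freshly generated prefix $y_1,\ldots,y_{j-1}$. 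Restricting to the set $R'=\{\underline y\colon k_j=1\ \forall j,\ s_1\geq 1\}$: the condition $s_1\geq 1$ forces $\gamma_{j-1}=0$ for all $j\geq 2$ (since $y_1$ then lies in every later window and contributes $\geq 1$ to the sum), while $k_j=1$ makes every factor $\gamma_{j-1}^{k_j-1}=1$ and the indicators trivial; hence $p_\ell(\sigma,\underline y)=\lambda^{\kappa-1}e^{-\lambda\sum_j s_j}$ on $R'$, \emph{independently of $\sigma$}. Integrating yields the Doeblin minorization $P^\ell(\sigma,\cdot)\geq e^{-\lambda}\nu_{R'}$ for all $\sigma$, with $\nu_{R'}$ the probability measure on $R'$ of density $e^{\lambda}\lambda^{\kappa-1}e^{-\lambda\sum_j s_j}$.

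\textbf{From the bounds to (U).} Using $\gamma_{j-1}\leq 1$ and dropping the indicator gives the uniform upper bound $p_\ell(\sigma,\underline y)\leq F(\underline y):=\lambda^{\kappa-1}e^{-\lambda\sum_j s_j}\prod_j\lambda^{k_j-1}/(k_j-1)!$, which has finite total mass $(\lambda e^\lambda)^{\kappa-1}$. This naive $F$ is however too coarse (its ratio to the invariant density grows polynomially in the $k$-entries). To obtain a bound compatible with~(U), I would pass to a larger iterate $\ell'$ (e.g.\ $\ell'=2(\kappa-1)$) and argue that each additional pass through $W_{\rm CSMA}$ integrates a factor of the form $(1-s)^{k-1}$ against a shifted-exponential density, gaining an additional $1/k$. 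After enough iteration one obtains $p_{\ell'}(\sigma,\cdot)\leq C\pi$ uniformly in $\sigma$ (as a measure), where $\pi$ is the unique invariant measure granted by the minorization. Since Doeblin implies geometric mixing, $\rho_\tau:=\frac 1N\sum_{i=1}^N P^i(\tau,\cdot)\to\pi$ as $N\to\infty$, so for sufficiently large $N$ we have $\rho_\tau\geq\pi/2$; combining gives $P^{\ell'}(\sigma,\cdot)\leq C\pi\leq 2C\rho_\tau$ for all $\sigma,\tau$, which is precisely Condition~(U) with $\ell=\ell'$, suitable $N$, and $M=2CN$.

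\textbf{Main obstacle.} The delicate step is the refined upper bound $p_{\ell'}(\sigma,\cdot)\leq C\pi$. The naive bound by $F$ fails because the "limiting saturation" $\gamma_{j-1}\to 1$ (which would give density $F$) is unreachable by the chain, as the $\sigma$-entries are always strictly positive. One must exploit the coupling among the $\gamma_{j-1}$'s, which share both $\sigma$- and $s$-variables, so that large $k_j$-values enforce small $\gamma$-values elsewhere. As an instructive instance, for $\kappa=2$ one checks directly that $p_2(\sigma,(k,s))\lesssim\lambda^{k+1}/k!\,e^{-\lambda s}$, of the same order as $\pi(k,s)$, with the missing $1/k$ arising exactly from integrating $(1-s_1)^{k-1}$ over $s_1$ against the shifted-exponential density $\lambda e^{-\lambda(s_1-\gamma_\sigma)}$. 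Extending this calculation to general $\kappa$ and confirming the uniform domination is the key technical point.
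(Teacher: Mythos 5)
Your argument has a genuine gap at exactly the point you flag as the ``key technical point'': the refined domination $p_{\ell'}(\sigma,\cdot)\leq C\pi$ is never established for general $\kappa$, and without it the proof does not close. Note that Condition~(U) as stated in the paper is an \emph{upper} bound on $P^\ell(\sigma,\cdot)$; your Doeblin minorization $P^\ell(\sigma,\cdot)\geq \e^{-\lambda}\nu_{R'}$ (which is correct, and whose decoupling trick ``force $s_1\geq 1$ so that all later $\gamma$'s vanish'' is sound) only delivers the classical lower-bound form of uniform ergodicity and does not by itself imply (U). Your route to the missing upper bound also contains a second flaw: from Doeblin you get total-variation convergence $\rho_\tau\to\pi$ uniformly in $\tau$, but total-variation closeness does \emph{not} give the measure-level domination $\rho_\tau\geq\pi/2$ (it fails on sets of small $\pi$-measure), so even granting $p_{\ell'}(\sigma,\cdot)\leq C\pi$ the final chain of inequalities is not justified. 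Finally, the detour through $\pi$ is intrinsically awkward here because $\pi$ has no closed form in this model.

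The paper avoids both difficulties by never invoking $\pi$: it proves directly that the $(\kappa+1)$-step densities from \emph{two arbitrary} starting points are comparable, $W_{\rm CSMA}^{\ssup{\kappa+1}}\big((a,t),(k,\d s)\big)\leq \widetilde M\, W_{\rm CSMA}^{\ssup{\kappa+1}}\big((\widetilde a,\widetilde t),(k,\d s)\big)$, which yields (U) with $\ell=N=\kappa+1$. The mechanism is to write the multi-step density as $\E_t\big[\gamma^{k-1}\1_{[0,s]}(\gamma)\big]$ over the intermediate $\sigma$'s and to sandwich the law of $\sum_{i=2}^\kappa\sigma_i$ between $\e^{-\lambda}f_0^{\star(\kappa-1)}$ (by restricting to $\{\sigma_1>1\}$ --- the same decoupling event you use) and $\e^{\lambda(\kappa-1)}f_0^{\star(\kappa-1)}$ (via the pointwise bound $f_\gamma\leq\e^{\lambda}f_0$ on the one-step densities). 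Because the \emph{same} reference density $f_0^{\star(\kappa-1)}$ appears in both the upper and the lower bound, the ratio is controlled by the explicit constant $\widetilde M=\e^{\lambda\kappa}$, and the polynomial growth in the $k$-entries that defeats your naive bound $F$ cancels automatically since the factor $\gamma^{k-1}\1_{[0,s]}(\gamma)$ is integrated against comparable densities on both sides. If you want to salvage your approach, you should replace the target $C\pi$ by $C\,P^{\ell}(\tau,\cdot)$ for an arbitrary second starting point $\tau$ and run your $s_1\geq1$ restriction on that side; this is essentially the paper's proof.
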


\begin{proof} Instead of the transition kernel $P_{\rm CSMA}$, it will be sufficient to work with the kernel $W_{\rm CSMA}$. We write $W_{\rm CSMA}^{\ssup{i}}$ for the $i$-th power of the kernel $W_{\rm CSMA}$. We will show the existence  of a constant $\widetilde M$ such that
\begin{align}\label{Ugoal}
W_{\rm CSMA}^{\ssup{\kappa+1}}\big((a,t),(k,\d s)\big)/ \d s
\leq \widetilde M\, W_{\rm CSMA}^{\ssup{\kappa+1}}\big((\widetilde a,\widetilde t),(k,\d s)\big)/ \d s
\end{align}
for $a,\widetilde a\in \N^{\kappa-1},t,\widetilde t\in(0,\infty)^{\kappa-1},k\in\N,s\in(0,\infty)$. It is clear that (U) follows from that assertion with $\ell=N=\kappa+1$ and $M=\widetilde M(\kappa+1)$.

From \eqref{WCSMAdef} we see that actually both sides of \eqref{Ugoal}  do not depend on $a$ nor on $\widetilde a$. We write both sides of \eqref{Ugoal} in terms of random variables, more precisely in terms of a $(\kappa-1)$-Markov chain $(\sigma_i)_{i\in\{-\kappa+2,-\kappa+3,\dots\}}$ using the notation $\E_t(\cdot)=\E(\cdot|\sigma_{-\kappa+i+1}=t_i\forall i\in[\kappa-1])$, and then we have
$$
W_{\rm CSMA}^{\ssup{\kappa+1}}\big((a,t),(k,\d s)\big)/ \d s=\frac{\lambda^{k}}{(k-1)!}\e^{-\lambda s}
\E_{t}\big[\gamma^{k-1}\1_{[0,s]}(\gamma)\big],\qquad\mbox{where }\gamma=\Big(1-\sum_{i=2}^\kappa \sigma_i\Big)_+.
$$
Hence, \eqref{Ugoal} is equivalent to 
\begin{equation}\label{Ugoalexpect}
\E_{t}\big[\gamma^{k-1}\1_{[0,s]}(\gamma)\big] \leq M \E_{\widetilde t}\big[\gamma^{k-1}\1_{[0,s]}(\gamma)\big],\qquad t,\widetilde t\in(0,\infty)^{\kappa-1}, k\in\N, s\in(0,\infty).
\end{equation}
We are going to find a lower bound for the expectation on the right by restricting to the event $\{\sigma_1>1\}$, on which $\sigma_2,\dots,\sigma_{\kappa}$ are independent ${\rm Exp}_\lambda$-distributed variables (this reflects the fact that, if for more than one time unit no new message arrives, then all channels are empty and the next $\kappa$ incoming messages will find a free channel). Furthermore, we will derive an upper bound for the left-hand side in terms of a multiple integral involving such random variables. 

Note that
$$
\E_{t}\big[\gamma^{k-1}\1_{[0,s]}(\gamma)\big]
=\int_{(1-s)_+}^1 \d x\, (1-x)^{k-1} \P_t\Big(\sum_{i=2}^\kappa\sigma_i \in \d x\Big)\big/\d x.
$$
Introduce the probability density $f_\gamma(s)= \lambda \e^{-\lambda (s-\gamma)}\1_{[\gamma,\infty)}(s)$ on $[0,\infty)$. Then  a density of $\sum_{i=2}^\kappa\sigma_i$ under $\E_{(t_2,t_3,\dots,t_{\kappa-1},s_1)}$ is the map
$$
(0,\infty)\ni x\mapsto \int_{(0,\infty)^{\kappa-1}}\d s_2\dots \d s_{\kappa}\, \1\Big\{\sum_{i=2}^\kappa s_i=x\Big\}\prod_{i=1}^{\kappa-1} f_{\gamma_i}(s_i), \qquad\mbox{with }\gamma_i=\Big(1-\sum_{j=-\kappa+2+i}^{i-1}s_j\Big)_+,
$$
where we put $s_{-\kappa+j}=t_j$. Now we estimate $f_\gamma(s)\leq \e^\lambda f_0(s)$, then the above density is upper-bounded by $\e^{\lambda (\kappa-1)} f_0^{\star(\kappa-1)}(x)$, where $f_0^{\star(\kappa-1)}$ is the $(\kappa-1)$-fold convolution of $f_0$. This implies that 
$$
\E_{t}\big[\gamma^{k-1}\1_{[0,s]}(\gamma)\big]
\leq \e^{\lambda (\kappa-1)} \int_{(1-s)_+}^1 \d x\, (1-x)^{k-1}f_0^{\star(\kappa-1)}(x).
$$
On the other hand, we may estimate
$$
\begin{aligned}
\E_{\widetilde t}\big[\gamma^{k-1}\1_{[0,s]}(\gamma)\big]
&\geq \E_{\widetilde t}\big[\1\{\sigma_1>1\}\gamma^{k-1}\1_{[0,s]}(\gamma)\big]\\
&=\P_{\widetilde t}(\sigma_1>1) \int_{(1-s)_+}^1 \d x\, (1-x)^{k-1} \P_t\Big(\sum_{i=2}^\kappa\sigma_i \in \d x\Big)\big/\d x\\
&\geq \e^{-\lambda } \int_{(1-s)_+}^1 \d x\, (1-x)^{k-1} f_0^{\star(\kappa-1)}(x).
\end{aligned}
$$
The last two displays together imply our goal, \eqref{Ugoalexpect}, and hence \eqref{Ugoal}.
\end{proof}

\subsection{Markov approach for the ALOHA protocol}\label{sec-ALOHAMarkov}

Now we turn to a similar treatment of the ALOHA protocol. We adopt all the notation from Section \ref{sec-LDPContTimeCSMA}; that is, we fix $\lambda\in(0,\infty)$ and $\kappa \in\N$ and assume that $(T_i)_{i\in\N}$ is a standard PPP($\lambda$) (the sequence of times at which a message comes in and requires a channel for being transmitted) and $\Ncal((a,b])$ is the number of Poisson points in the time interval $(a,b]$ for any $a<b$. 

Recall that, in the ALOHA protocol, each incoming message jumps into a randomly picked one of the $\kappa$ channels, regardless whether it is idle or busy. If it is busy, then it destroys the message that is currently in the channel and itself as well. As a consequence, the new message is rejected immediately, i.e., it does not get access to the system, while the old one first remains in the channel until its service time is over and leaves after one time unit without having been successfully delivered. However, if the channel is idle, then the new message is only \emph{potentially} successful, since it can still be destroyed during the service time by a new arriving one that picks this channel. This uncertain situation remains until one time unit after the entry into the channel; then the message is successfully delivered if it has not been cancelled by then.

We consider the sequence $(\widetilde T_i)_{i\in\N}$ of all the times at which an incoming message picks an idle  channel, a subsequence of $(T_i)_{i\in\N}$. We again put $\sigma_i=\widetilde{T}_i-\widetilde{T}_{i-1}$ and $A_i=A((\widetilde{T}_{i-1},\widetilde{T}_i])$, which is $1+$ the number of incoming messages that jump into some busy channel and therefore destroy the message therein. Recall that $\Sigma=\N\times (0,\infty)$.

\begin{lemma}[Markovian structure of $(A_i,\sigma_i)_{i\in\N}$ in ALOHA case]\label{lem-MarkovALOHA}
The sequence $(A_i,\sigma_i)_{i\in\N}$ is a $(\kappa-1)$-Markov chain with kernel $W_{\rm ALOHA}$ from $\Sigma^{\kappa-1}$ to $\Sigma$ defined by
\begin{equation*}
W_{\rm ALOHA}\big(\big((a_1,t_1),\cdots,(a_{\kappa-1},t_{\kappa-1})\big), (k,\d s)\big)=\frac{\big(\gamma+\frac {B(s)}\kappa\big)^{k-1}}{(k-1)!}\left(1-\frac{\beta(s)}{\kappa}\right)\lambda^k \e^{-\lambda s}\1_{[\gamma_i,\infty)}(s)\,\d s,
\end{equation*}
where we wrote $\gamma:=[1-\sum_{k=1}^{\kappa-1}t_k]_+$ and
\begin{equation}\label{betandef}
\begin{aligned}
\beta(s):=\max\Big\{m\in\N\colon  s+\sum_{j=0}^{m-2}t_{\kappa-1-j}\leq 1\Big\}\wedge\kappa \in\{0,1,\dots,\kappa\} \qquad\mbox{and}\qquad B(s)=\int_0^s\beta(r)\,\d r
\end{aligned}
\end{equation}
for the number of busy channels $s$ time unit after the last successful arrival and its primitive.
\end{lemma}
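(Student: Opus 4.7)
My plan is to mirror the strategy of Lemma~\ref{lem-MarkovCSMA}: fix $i$, condition on the full history of the PPP and the channel assignments up to time $\widetilde T_i$, and identify the conditional distribution of $(A_{i+1},\sigma_{i+1})$. By the memoryless property of the PPP, the arrivals in $(\widetilde T_i,\infty)$ form an independent PPP$(\lambda)$, and the channel choices for those arrivals are i.i.d.\ uniform on $\{1,\dots,\kappa\}$ and independent of everything; the only information from the past relevant to admissions in $(\widetilde T_i,\widetilde T_{i+1}]$ is the number $\beta(s)$ of channels that are busy at time $\widetilde T_i+s$.

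First I would verify the formula for $\beta(s)$: a message admitted at $\widetilde T_{i-j}$, for $0\le j\le\kappa-1$, still occupies its channel at time $\widetilde T_i+s$ exactly when $s+\sum_{k=0}^{j-1}\sigma_{i-k}\le 1$, which under the ordering $t_{\kappa-1}=\sigma_i,\,t_{\kappa-2}=\sigma_{i-1},\dots$ is precisely the condition in \eqref{betandef}; counting such $j$'s and capping at $\kappa$ produces $\beta(s)$, which therefore depends on the past only through $t_1,\dots,t_{\kappa-1}$. In particular $\beta\equiv\kappa$ on $[0,\gamma]$, so every PPP point in $(\widetilde T_i,\widetilde T_i+\gamma)$ hits a busy channel with probability one; this forces $\sigma_{i+1}\ge\gamma$ almost surely and accounts for the indicator $\1_{[\gamma,\infty)}(s)$ in the kernel.

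The heart of the proof is a time-inhomogeneous Poisson thinning. Mark each Poisson point at $\widetilde T_i+r$ independently as ``admitted'' with probability $1-\beta(r)/\kappa$ or ``rejected'' with the complementary probability; since the underlying process is Poisson with independent channel labels, the two resulting streams are \emph{independent} Poisson processes with intensities $\lambda(1-\beta(r)/\kappa)$ and $\lambda\beta(r)/\kappa$. Hence $\sigma_{i+1}$ is the waiting time to the first admitted point and has density
\begin{equation*}
\lambda\Bigl(1-\tfrac{\beta(s)}{\kappa}\Bigr)\exp\Bigl(-\int_0^s\lambda\bigl(1-\tfrac{\beta(r)}{\kappa}\bigr)\,\d r\Bigr)=\lambda\Bigl(1-\tfrac{\beta(s)}{\kappa}\Bigr)\e^{-\lambda s+\lambda B(s)/\kappa}
\end{equation*}
on $[\gamma,\infty)$, while, conditionally on $\sigma_{i+1}=s$, the number $A_{i+1}-1$ of rejected arrivals in $(0,s)$ is, by independence of the two streams, Poisson with mean $\lambda\int_0^s\beta(r)/\kappa\,\d r=\lambda B(s)/\kappa$. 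Multiplying the marginal density of $\sigma_{i+1}$ by the conditional probability of $A_{i+1}=k$ cancels the $\e^{\pm\lambda B(s)/\kappa}$ factors and produces the kernel $W_{\rm ALOHA}$ in the stated form. Since the whole expression depends on the past only through $(t_1,\dots,t_{\kappa-1})$, the $(\kappa-1)$-Markov property is obtained simultaneously with the explicit transition kernel.

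For the uniform ergodicity Condition~(U), I would repeat the $(\kappa+1)$-step sandwich of Lemma~\ref{lem-UCSMA}: bound the iterated kernel $W_{\rm ALOHA}^{\ssup{\kappa+1}}$ from above using the trivial inequalities $0\le\beta(s)/\kappa\le 1$ and $0\le B(s)\le\kappa s$, and from below by restricting each intermediate inter-admission time to a compact window containing a ``reset'' gap $\sigma_j>1$, after which all channels are empty and the subsequent steps reduce to independent ${\rm Exp}(\lambda)$-like variables exactly as in the CSMA argument. The main technical obstacle I expect is that, unlike in CSMA, the factor $1-\beta(s)/\kappa$ genuinely depends on the past window $(t_1,\dots,t_{\kappa-1})$ and vanishes at the boundary $s=\gamma$, so some extra bookkeeping is needed to obtain a clean ratio; nevertheless, its uniform boundedness in $[0,1]$ and strict positivity away from that boundary suffice to push the same sandwich argument through and deliver~\eqref{U}.
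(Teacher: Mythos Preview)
Your argument is correct and is a streamlined version of the paper's own proof. The paper splits $(\widetilde T_i,\widetilde T_i+s]$ at $\widetilde T_i+\gamma_i$: arrivals in the first piece are $\Poi_{\lambda\gamma_i}$-many and automatically rejected; on the second piece it sums explicitly over the number $n$ of arrivals, integrates over their ordered positions $\gamma_i<s_1<\dots<s_{n-1}<s$ with weights $\prod_k\beta(s_k)/\kappa$, and recognises the exponential series; the joint law of $(A_{i+1},\sigma_{i+1})$ is then obtained by convolving the two pieces and applying the binomial theorem. Your single time-inhomogeneous Poisson thinning over all of $(0,s)$ replaces this whole computation at once; the independence of the admitted and rejected streams that you invoke is exactly what the paper verifies by hand.

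One caution on the final identification: with the convention $\beta\equiv\kappa$ on $[0,\gamma]$ (which is what the max-formula in the statement gives, and which you use), your product comes out as
\[
\frac{(B(s)/\kappa)^{k-1}}{(k-1)!}\Bigl(1-\tfrac{\beta(s)}{\kappa}\Bigr)\lambda^k\e^{-\lambda s}\1_{[\gamma,\infty)}(s),
\]
without the extra $\gamma$ in the numerator. The factor $\gamma+B(s)/\kappa$ in the displayed kernel reflects the paper's \emph{proof} convention, in which $\beta$ is set to $0$ on $[0,\gamma_i]$ and the $\Poi_{\lambda\gamma_i}$ contribution from that interval is added separately; under your convention that $\gamma$ is already contained inside $B(s)/\kappa$, so the two expressions agree once the conventions are reconciled. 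Flag this explicitly when you write it up. Finally, Condition~(U) is not part of Lemma~\ref{lem-MarkovALOHA} but is the separate Lemma~\ref{lem-UALOHA}, so your last paragraph belongs there rather than here.
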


\begin{proof} We keep $i\in\N$ fixed, condition on $(A_j,\sigma_j)_{j\leq i}$ and examine the distribution of $(A_{i+1},\sigma_{i+1})$. Let us first examine the density of the probability of the event $\{\sigma_{i+1}=s\}$. At time $\widetilde{T}_{i+1}$ there is at least one free channel in order for the arriving message to access the system. Hence, $\widetilde T_{i+1}$ must be after time $\widetilde T_i+\gamma_i$ , where $\gamma_i:=[1-\sum_{n=i-\kappa+2}^{i}\sigma_{n}]_+$, like in the CSMA model, that is, $\sigma_{i+1}>\gamma_i$. However, this time $\widetilde{T}_{i+1}$ is not necessarily the first point of the PPP after $\widetilde T_i+\gamma_i$, but the first Poisson point after $\widetilde T_i+\gamma_i$ at  which an idle channel is picked. Hence, we have to calculate the probability of picking a free channel at an Poisson time point. For this, we need  to know the number of free channels at any arbitrary time after $\widetilde T_i+\gamma_i$.

So let $\widetilde{T}_i+s$, $s>\gamma_i$, be this arbitrary time. If $s>1$, at least one time unit has passed without new incoming messages after $\widetilde{T}_{i}$, which means that all channels are idle again at $\widetilde{T}_i+s$. If $s\leq 1$, at least one channel is busy at time $\widetilde T_i+s$, as there is at least one message, namely the one arrived at $\widetilde{T}_{i}$, whose service time is not over yet. Of course, there could be more messages still remaining in the system, depending on $s$, and we have to determine this relation. It is clear, that if additionally  $s+\sigma_i\leq 1$, then the messages arrived at $\widetilde{T}_{i-1}$ is also still in the system, so there are at least two busy channels at time $\widetilde T_i+s$. Analogously, there must be at least 3 occupied channels, if $s+\sigma_i+\sigma_{i-1}\leq 1$ additionally, as the service time of the message arrived at $\widetilde{T}_{i-2}$ is also not over yet. We see step by step, that if $s+\sum_{k=0}^{\kappa-3}\sigma_{i-k}\leq 1$, we have at least $\kappa-1$ busy channels and if $s+\sum_{k=0}^{\kappa-2}\sigma_{i-k}\leq 1$, all $k$ channels must be busy, as the delivery of all the last $k$ messages is still remaining. Hence, the number of busy channels at time  $\widetilde{T}_i+s$ is given, for $s>\gamma_i$, by
\begin{equation}
\beta(s):=\1_{\{s\leq 1\}}+\1_{\{s+\sigma_i\leq 1\}}+\cdots+\1_{\{s+\sum_{k=0}^{\kappa-2}\sigma_{i-k}\leq 1\}}
=\max\Big\{m\in\N\colon s+\sum_{k=0}^{m-2}\sigma_{i-k}\leq 1\Big\}\wedge\kappa.
\end{equation}
Then it is clear that the probability of picking randomly a busy respectively free channel at time $\widetilde T_i+s$ is equal to $\frac{\beta(s)}{\kappa}$ respectively $1-\frac{\beta(s)}{\kappa}$. So the first point of the PPP after $\widetilde T_i+\gamma_i$ coincides only with probability $1-\frac{\beta(s)}{\kappa}$ with $\widetilde{T}_{i+1}$ (on the event $\{\sigma_{i+1}=s\}$). This yields the density
\begin{align*}
s\mapsto \Big(1-\frac{\beta(s)}{\kappa}\Big)\lambda \e^{-\lambda(s-\gamma_i)}\1_{[\gamma_i,\infty)}(s)
\end{align*} 
for the probability that the first message after time $\widetilde T_i+\gamma_i$ picks a free channel. 

Now we consider, for any $n\in\N$, the event that $\widetilde{T}_{i+1}=\widetilde T_i+s$ is the $n$-th point of the PPP after $\widetilde{T}_{i}+\gamma_i$ at which for the first time an idle channel is picked. On the event that there are precisely $n-1$ Poisson points in the interval $(\widetilde T_i+\gamma_i,\widetilde T_{i+1})$ and another one at $\widetilde{T}_{i+1}=\widetilde T_i+s$, the density of these $n$ Poisson points is equal to
$$
(s_1,\dots,s_{n-1},s)\mapsto \1_{\{\gamma_i<s_1<s_2<\dots<s_{n-1}<s\}} \lambda^{n}\e^{-\lambda(s-\gamma_i)}\,\d s_1\cdots \d s_{n-1}\d s.
$$
On this event, the probability that the first $n-1$ of them pick a busy channel and the last one an idle one is equal to 
\begin{align*}
\Big(1-\frac{\beta(s)}\kappa\Big)\prod_{k=1}^{n-1}\frac{\beta(s_k)}\kappa.
\end{align*}
In order to obtain the density of $\sigma_{i+1}$, we need to integrate over all these $s_1,\dots,s_{n-1}$ and have to sum on $n\in\N$. Hence, the conditional distribution of $\sigma_{i+1}$ is given as
\begin{align*}
\mathbb{P}&(\sigma_{i+1}\in\d s\mid (A_j, \sigma_j)_{j\leq i})\\
&=\sum_{n=1}^{\infty} \Big[\int_{\gamma_i<s_1<s_2<\dots<s_{n-1}<s}\Big(\prod_{k=1}^{n-1}\frac{\beta(s_k)}\kappa\Big)\,\d s_1\d s_2\dots,\d s_{n-1}\Big]\,\Big(1-\frac{\beta(s)}\kappa\Big) \lambda^{n}\e^{-\lambda(s-\gamma_i)}\,\d s\\
&=\sum_{n=1}^{\infty}\frac1{(n-1)!}\Big[\int_0^s \frac{\beta(r)}\kappa\,\d r\Big]^{n-1} \left(1-\frac{\beta(s)}{\kappa}\right)\lambda^n \e^{-\lambda(s-\gamma_i)}\1_{[\gamma_i,\infty)}(s)\,\d s\\
&=\e^{\lambda B(s)/\kappa}\left(1-\frac{\beta(s)}{\kappa}\right)\lambda \e^{-\lambda(s-\gamma_i)}\1_{[\gamma_i,\infty)}(s)\,\d s,
\end{align*}
where we used the exponential series and remind on \eqref{betandef}, now with $\gamma_i$ instead of $\gamma$ (observe that $\beta(s)=0$ on $[0,\gamma_i]$).

Now, we look at the intersection of $\{\sigma_{i+1}=s\}$ with the event $\left\{A_{i+1}=k\right\}$ for $k\in\mathbb{N}$. Here we have $k-1$ unsuccessful attempts during $(\widetilde{T}_{i},\widetilde{T}_{i+1}]$; indeed these are all the Poisson points in the interval $(\widetilde{T}_i, \widetilde{T}_i+\gamma_i]$ plus the ones in the interval $(\widetilde{T}_i+\gamma_i,\widetilde T_{i+1})$ that failed to pick an idle channel, and these two numbers are independent by the properties of the PPP. The number of the first ones have a Poisson distribution with parameter $\lambda \gamma_i$, and the one of the latter ones has been examined above. Hence, the conditional distribution of $A_{i+1}$ is the convolution of these two:
$$
\begin{aligned}
\mathbb{P}&\big(A_{i+1}=k, \sigma_{i+1}\in\d s\mid (A_j, \sigma_j)_{j\leq i}\big)
\\
&=\sum_{n=1}^{k} \frac{(\lambda\gamma_i)^{k-n}}{(k-n)!}\e^{-\lambda\gamma_i}\left(1-\frac{\beta(s)}{\kappa}\right)\frac{B(s)^{n-1}/(n-1)!}{\kappa^{n-1}}\lambda^n \e^{-\lambda(s-\gamma_i)}\1_{[\gamma_i,\infty)}(s)\,\d s
\\
&=\gamma_i^{k-1}\sum_{n=1}^{k} \frac{(B(s)/\gamma_i\kappa)^{n-1}}{(k-n)!(n-1)!}\left(1-\frac{\beta(s)}{\kappa}\right)\lambda^k \e^{-\lambda s}\1_{[\gamma_i,\infty)}(s)\,\d s
\\
&=\frac{\gamma_i^{k-1}}{(k-1)!}\Big(1+\frac{B(s)}{\gamma_i \kappa}\Big)^{k-1}\left(1-\frac{\beta(s)}{\kappa}\right)\lambda^k \e^{-\lambda s}\1_{[\gamma_i,\infty)}(s)\,\d s\\
&=W_{\rm ALOHA}\big(\big((a_1,t_1),\cdots,(a_{\kappa-1},t_{\kappa-1})\big), (k,\d s)\big)\big),
\end{aligned}
$$
where we used the binomial theorem. In particular, we see that we have again a $(\kappa-1)$-Markov chain, as the transition probaility depends only on $\sigma_i,\dots,\sigma_{i-\kappa+2}$ (and by the way, not at all on the $A_j$'s). 
\end{proof}

Analogously to the CSMA case in Section~\ref{sec-LDPContTimeCSMA}, the sequence of $(\kappa-1)$-vectors $R_i^{\ssup{\rm ALOHA}}$ of $(A_i,\sigma_i)_{i\in\N}$ defined as in \eqref{Rdef} form a Markov chain on the state space $\Sigma^{\kappa-1}$ with a transition kernel $P_{\rm ALOHA}$ that is defined analogously to \eqref{PCSMAdef}. Also the analogue to Lemma~\ref{lem-UCSMA} holds:

\begin{lemma}[Uniform ergodicity of $(R_i^{\ssup{\rm ALOHA}})_{i\in\N}$]\label{lem-UALOHA}
For the ALOHA protocol, for any $\lambda\in(0,\infty)$ and $\kappa\in\N$, the Markov chain $(R_i^{\ssup{\rm ALOHA}})_{i\in\N}$  satisfies (U).
\end{lemma}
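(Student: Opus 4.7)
The plan is to mimic the structure of the CSMA proof of Lemma~\ref{lem-UCSMA}, establishing the pointwise comparison
\begin{equation*}
W_{\rm ALOHA}^{\ssup{\kappa+1}}\big((a,t),(k,\d s)\big)\big/\d s \;\leq\; \widetilde M\cdot W_{\rm ALOHA}^{\ssup{\kappa+1}}\big((\widetilde a,\widetilde t),(k,\d s)\big)\big/\d s
\end{equation*}
uniformly in $(a,t),(\widetilde a,\widetilde t),k,s$, from which (U) with $\ell=N=\kappa+1$ and $M=\widetilde M(\kappa+1)$ will follow exactly as in the CSMA case. Both sides are independent of $a$ and $\widetilde a$, so only $t$ and $\widetilde t$ need to be tracked.

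First I would express the $(\kappa+1)$-iterated kernel as an expectation over the intermediate chain variables $\sigma_1,\dots,\sigma_\kappa$ generated by $W_{\rm ALOHA}$ from initial state $t$, namely
\begin{equation*}
W_{\rm ALOHA}^{\ssup{\kappa+1}}\big((a,t),(k,\d s)\big)\big/\d s \;=\; \frac{\lambda^k \e^{-\lambda s}}{(k-1)!}\, \E_t\!\Big[H_{k,s}(\sigma_2,\dots,\sigma_\kappa)\Big],
\end{equation*}
with $H_{k,s}(\sigma_2,\dots,\sigma_\kappa) := \big(\gamma+\tfrac{B(s)}{\kappa}\big)^{k-1}\big(1-\tfrac{\beta(s)}{\kappa}\big)\1_{[\gamma,\infty)}(s)\geq 0$, and $\gamma,B,\beta$ the functionals from Lemma~\ref{lem-MarkovALOHA} of the last $\kappa-1$ chain variables $(\sigma_\kappa,\dots,\sigma_2)$. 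Since $H_{k,s}\geq 0$ depends only on $(\sigma_2,\dots,\sigma_\kappa)$, it suffices to prove a pointwise bound $g_t(\vec s)\leq M_0\, g_{\widetilde t}(\vec s)$ for the joint density $g_\bullet$ of $(\sigma_2,\dots,\sigma_\kappa)$.

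For the upper bound on $g_t$, I would use the crude estimates $\gamma\leq 1$, $B/\kappa\leq 1$, and $1-\beta/\kappa\leq 1$ on the single-step marginal density to get $f_t^{\ssup{i}}(s_i\mid\cdot)\leq \e^{2\lambda}\lambda\, \e^{-\lambda s_i}$. Iterating over the $\kappa$ steps and integrating out $s_1$ then yields $g_t(\vec s)\leq \e^{2\lambda\kappa}\lambda^{\kappa-1}\, \e^{-\lambda\sum_{i=2}^\kappa s_i}$. For the matching lower bound on $g_{\widetilde t}$, I would restrict to the event $\{\sigma_1>1\}$, which plays the same memoryless-reset role as in the CSMA argument: whenever $\sigma_1>1$ lies in the $(\kappa-1)$-past used to generate any of $\sigma_2,\dots,\sigma_\kappa$, it forces $\gamma=0$ and makes every $m$-th condition in the definition of $\beta$ that involves $\sigma_1$ fail, so $\beta\leq\kappa-1$ and hence $1-\beta/\kappa\geq 1/\kappa$. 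This gives $f_{\widetilde t}^{\ssup{i}}(s_i\mid\cdot)\geq(\lambda/\kappa)\e^{-\lambda s_i}$ on $\{\sigma_1>1\}$ for $i\in\{2,\dots,\kappa\}$. For the first step, when $s_1>1$ one has $\beta_t(s_1)=0$ and $B_t(1)\geq 1$ (because $\beta_t(r)\geq \1_{\{r\leq 1\}}$ automatically), so $\gamma(t)+B_t(1)/\kappa\geq 1/\kappa$ and hence $f_{\widetilde t}^{\ssup{1}}(s_1)\geq \lambda\,\e^{\lambda/\kappa}\e^{-\lambda s_1}$. Multiplying these single-step lower bounds and integrating $s_1$ over $(1,\infty)$ yields $g_{\widetilde t}(\vec s)\geq \lambda^{\kappa-1}\,\e^{\lambda/\kappa-\lambda}\,\kappa^{-(\kappa-1)}\e^{-\lambda\sum_{i=2}^\kappa s_i}$, and the ratio bound then follows with $M_0=\e^{(2\kappa+1)\lambda-\lambda/\kappa}\kappa^{\kappa-1}$.

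The main subtlety compared to CSMA is that $\beta(s)$ and $B(s)$ carry the dependence on the past through partial sums of varying length (rather than the single sum entering the CSMA $\gamma$), so one has to argue that a single long gap $\sigma_1>1$ simultaneously kills all of these past-dependent contributions, while the same event also delivers the pointwise lower bound $\gamma+B/\kappa\geq 1/\kappa$ needed to absorb the extra factor $(1-\beta/\kappa)\in[1/\kappa,1]$ that is not present in the CSMA kernel. I expect this bookkeeping to be the only genuinely new ingredient beyond the CSMA proof.
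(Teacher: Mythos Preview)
Your proposal is correct and follows essentially the same route as the paper: reduce (U) to a pointwise comparison of the $(\kappa+1)$-step kernel, rewrite this as an expectation of a nonnegative functional of $(\sigma_2,\dots,\sigma_\kappa)$, and then compare the joint densities of $(\sigma_2,\dots,\sigma_\kappa)$ under $\P_t$ and $\P_{\widetilde t}$ using a crude upper bound on the one-step density together with the regeneration event $\{\sigma_1>1\}$ for the lower bound. Your treatment of the ALOHA-specific factors is in fact more careful than the paper's: the paper writes the density of $(\sigma_2,\dots,\sigma_\kappa)$ as $\prod f_{\gamma_i}$ (the CSMA formula) and bounds it by $\e^{\lambda(\kappa-1)}f_0^{\otimes(\kappa-1)}$, whereas you correctly use the ALOHA one-step marginal $\lambda\e^{\lambda(\gamma+B/\kappa)}(1-\beta/\kappa)\e^{-\lambda s}\1_{[\gamma,\infty)}$ and bound it by $\e^{2\lambda}\lambda\e^{-\lambda s}$, and you explicitly extract the extra factor $1-\beta/\kappa\geq 1/\kappa$ on the reset event for the lower bound. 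The resulting constants differ ($\widetilde M=\e^{\lambda\kappa}$ in the paper versus your $M_0=\e^{(2\kappa+1)\lambda-\lambda/\kappa}\kappa^{\kappa-1}$), but both yield (U) with $\ell=N=\kappa+1$.
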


\begin{proof}
We use the same strategy as in the proof of Lemma \ref{lem-UCSMA} and will prove that \eqref{Ugoal} holds for some $\widetilde M$.  Again abbreviate $\E_t(\cdot)=\E(\cdot|\sigma_{-\kappa+i+1}=t_i\forall i\in[\kappa-1])$ for $t=(t_1,\cdots,t_{\kappa-1})\in (0,\infty)^{\kappa-1}$. Then, using the notation $\gamma=\left[1-\sum_{i=2}^{\kappa}\sigma_i\right]_+$ and
\begin{align*}
\beta^{\ssup\sigma}(s):=\1_{[\gamma,\infty)}(s)\,\max\Big\{m\colon s+\sum_{i=0}^{m-2}\sigma_{\kappa-i}\leq 1\Big\}\wedge\kappa
\end{align*}
we can write
\begin{align*}
W_{\rm ALOHA}^{\ssup{\kappa+1}}\big((a,t),(k,\d s)\big)/ \d s=\frac{\lambda^k}{(k-1)!}\left(1-\frac{\beta(s)}{k}\right)e^{-\lambda s} \mathbb{E}_t\left[\left(\gamma+\frac{\beta^{\ssup\sigma}(s)}{\kappa}\right)^{k-1}  \1_{[0,s]}(\gamma)\right];
\end{align*}
observe that  $\beta^{\ssup\sigma}(s)$ and $\gamma$ are also functions of the random variables $\sigma_2,\cdots,\sigma_{\kappa}$. 

For our goal it is sufficient to show the existence of some $\widetilde M>0$ such that, for each $t,\tilde{t}\in(0,\infty)^{\kappa-1}$, $s>0$, $k\in\mathbb{N}$ and $n\leq k$
\begin{align*}
\mathbb{E}_t\big[G(\sigma_2,\dots,\sigma_{\kappa})\big]\leq \widetilde M \mathbb{E}_{\tilde{t}}\big[G(\sigma_2,\dots,\sigma_{\kappa})\big],\qquad\mbox{for }G(\sigma_2,\dots,\sigma_{\kappa})=\left(\gamma+\frac{\beta^{\ssup\sigma}(s)}{\kappa}\right)^{k-1}  \1_{[0,s]}(\gamma).
\end{align*}
We will show this even for any non-negative measurable function $G$ (with the same constant $\widetilde M$) by showing the corresponding inequality for the respective densities of $(\sigma_2,\dots,\sigma_{\kappa})$ under $\P_t$ and $\P_{\widetilde t}$.

Recall the probability density $f_{g}(s)=\lambda \e^{-\lambda(s-g)}\1_{[g,\infty)}(s)$ on $s\in[0,\infty)$ for any $g\in[0,\infty)$. Now, a density of $(\sigma_2,\dots,\sigma_{\kappa})$ under $\mathbb{E}_{(t_2,t_3,\cdots,t_{\kappa-1},s_1)}$ is the map
\begin{equation}\label{density}
(s_2,\dots,s_\kappa)\mapsto \prod_{i=1}^{\kappa-1}f_{\gamma_i}(s_i),
\end{equation}
where we wrote $s_{-\kappa+j}=t_j$ and $\gamma_i:=[1-\sum_{j=-\kappa+2+i}^{i-1}s_j]_+$. Since $f_g(s)\leq \e^{\lambda} f_0(s)$ for any $g,s\in[0,\infty)$, this density is upper bounded by $\e^{\lambda(\kappa-1)}f_0^{\otimes(\kappa-1)}(s_2,\dots,s_\kappa)$, where $f_0^{\otimes(\kappa-1)}$ is the $(\kappa-1)$-fold tensor product of $f_0$. Then we can upper bound the left-hand side as follows (the expectation after the first equality is on $\sigma_1$):
\begin{align*}
\mathbb{E}_t\big[G(\sigma_2,\dots,\sigma_{\kappa})\big]
&=\E_t\Big[\int_{[0,\infty)^{\kappa-1}} G(s_2,\dots,s_\kappa)\,\prod_{i=1}^{\kappa-1}f_{\gamma_i}(s_i)\Big]\\
&\leq \e^{\lambda(\kappa-1)}\E_t\Big[\int_{[0,\infty)^{\kappa-1}} G(s_2,\dots,s_\kappa)\,f_0^{\otimes(\kappa-1)}(s_2,\dots,s_\kappa)\Big]\\
&=\e^{\lambda(\kappa-1)}\int_{[0,\infty)^{\kappa-1}} G(s_2,\dots,s_\kappa)\,f_0^{\otimes(\kappa-1)}(s_2,\dots,s_\kappa).
\end{align*}

On the other hand, if we restrict to the event $\{\sigma_1>1\}$, then $\sigma_2,\dots,\sigma_{\kappa}$ are again independent exponentially distributed variables and independent of $\sigma_1$, and we can estimate
\begin{align*}
\mathbb{E}_{\widetilde{t}}\big[G(\sigma_2,\dots,\sigma_{\kappa})\big]
&\geq \mathbb{E}_{\widetilde{t}}\big[\1_{\{\sigma_1>1\}}G(\sigma_2,\dots,\sigma_{\kappa})\big]
\\
&=\mathbb{P}_{\widetilde{t}}(\sigma_1>1)
\int_{[0,\infty)^{\kappa-1}} G(s_2,\dots,s_\kappa)\, f_0^{\otimes(\kappa-1)}(s_2,\dots,s_\kappa)\,\d s_2,\dots,\d s_\kappa.
\end{align*}
Since $\mathbb{P}_{\widetilde{t}}(\sigma_1>1)\geq \e^{-\lambda}$, this implies the assertion with $\widetilde M=\e^{\lambda\kappa}$.
\end{proof}

\section{Large deviations}
\label{sec-ProofLDCSMA}
\noindent  In this section, we prove a large deviation upper bound for the pair $\frac{1}{t}(S(t),A(t))$ for both protocols. We can make most of the steps jointly for both protocols. The basis of our large-deviation analysis is the empirical pair measures and the empirical $\kappa$-string measures $L_n^\kappa$ of the Markov chain that we introduced in Section~\ref{sec-MC}. The two LDPs for $(L_n^\kappa)_{n\in\N}$ as $n\to\infty$ are easily derived from general theory, and the main object, $(A(t),S(t))$ is a kind of time-inverse of $n\mapsto (\langle \pi_1,L_n^\kappa\rangle, \langle \pi_1,L_n^\kappa\rangle)$. However, there are two problems left: The latter is {\em a priori} not a continuous functional of $L_n^\kappa$, and we need to make the step from an LDP for this pair to the pair $(A(t),S(t))$. These two major steps will be done in Lemmas~\ref{lem-LDPone} and \ref{LDPCSMA}. However, we were not able to overcome the lack of continuity of $\mu\mapsto \langle \pi_2,\mu\rangle$ and cannot derive a full LDP for $(A(t),S(t))$.

Let us abbreviate $\Sigma=\N\times (0,\infty)$ and let $*\in\{\rm CSMA, \rm ALOHA\}$. We introduce the {\em empirical pair measure} of the Markov chain $(R^*_i)_{i\in\N_0}$ defined in \eqref{Rdef},
$$
L_n^{\ssup 2}=\frac 1n\sum_{i=1}^n\delta_{(R^*_{i-1},R^*_i)}\in\Mcal_1(\Sigma^{\kappa-1}\times\Sigma^{\kappa-1}).
$$
In this expression, we assume periodic boundary conditions, i.e., $R^*_0=R^*_n$. Then $L_n^{\ssup 2}$ satisfies the marginal property: its two marginal measures are equal to each other. We denote  by $\Mcal_1^{\ssup {\rm s}}(\Sigma^{\kappa-1}\times\Sigma^{\kappa-1})$ the set of probability measures $\nu$ on $\Sigma^{\kappa-1}\times\Sigma^{\kappa-1}$ that satisfy this marginal property and write $\overline\nu$ for any of the two marginal measures of $\nu$. (The assumption $R^*_0=R^*_n$ is only of technical nature and can also be dropped without any problem, but we will not elaborate on that minor point.)

Since $(R^*_i)_{i\in\N_0}$ satisfies the condition (U) by Lemmas~\ref{lem-UCSMA} and \ref{lem-UALOHA}, respectively, \cite[Exercise 4.1.48]{deuschel} says that there is an invariant distribution $\nu_*$ of $(R^*_i)_{i\in\N_0}$. Then, by \cite[Lemma 4.1.45]{deuschel} the empirical pair measures $(L_n^{\ssup 2})_{n\in\mathbb{N}}$ converges almost surely towards $\bar{\nu}_*\otimes P_*$. Furthermore, we even get  a good control on the rate of this convergence: By \cite[Cor.~6.5.10 and Th.~6.5.12]{dembozeitouni} the empirical pair measures $(L_n^{\ssup 2})_{n\in\N}$ satisfies an LDP on $\Mcal_1(\Sigma^{\kappa-1}\times\Sigma^{\kappa-1})$ with rate function
\begin{equation}\label{entropy}
\nu\mapsto H(\nu\mid\overline\nu\otimes P_{*})= \int_{\Sigma\times\Sigma}\nu(\d R,\d R')\log\frac{\nu(\d R,\d R')}{\overline\nu(\d R)P_{*}(R,\d R')},
\end{equation}
if $\nu\in \Mcal_1^{\ssup {\rm s}}(\Sigma^{\kappa-1}\times\Sigma^{\kappa-1})$ is absolutely continuous with respect to $\overline \nu\otimes P_*$, and $\infty$ otherwise. The term in \eqref{entropy} is called the {\em relative entropy} of $\nu$ with respect to $\overline{\nu}\otimes P_*$. 

The empirical pair measures $L_n^{\ssup 2}$ stand in a simple one-to-one correspondence with the empirical $\kappa$-string measures that we are going to introduce now; we  would like to formulate  our LDP in terms of these measures instead. Consider the set $\Mcal_1^{\ssup{\rm s}}(\Sigma^\kappa)$ of probability measures $\mu$ on $\Sigma^\kappa$ whose first marginal measure $\mu^{\ssup{\kappa-1}}$ on $\Sigma^{\kappa-1}$ (i.e., when projected on the first $\kappa-1$ components) is equal to its second marginal measure (i.e., when projected on the last $\kappa-1$ components). Then the above LDP for $(L_n^{\ssup 2})_{n\in\N}$ is equivalent to saying that the empirical $\kappa$-string measure
$$
L_n^{\kappa}=\frac 1n\sum_{i=0}^{n-1}\delta_{((A_{i+1},\sigma_{i+1}),\dots,(A_{i+\kappa},\sigma_{i+\kappa}))}
$$
satisfies an LDP on $\Mcal_1(\Sigma^\kappa)$ with rate function
\begin{equation}\label{entropytwo}
\mu\mapsto H(\mu\mid \mu^{\ssup{\kappa-1}}\otimes W_{*})=\int_{\Sigma^{\kappa}}\d\mu \,\log\frac{\d\mu}{\d(\mu^{\ssup{\kappa-1}}\otimes W_{*})},
\end{equation}
if $\mu\in \Mcal_1^{\ssup{\rm s}}(\Sigma^\kappa)$, and $=\infty$ otherwise. 

Let us first analyse the rate function.  Recall the projections $\pi_1\colon \Sigma^\kappa \to \N$ and  $\pi_2\colon \Sigma^\kappa \to(0,\infty)$ defined by $\pi_1((a_1,r_1),\dots,(a_\kappa,r_\kappa))=a_\kappa$ and $\pi_2((a_1,r_1),\dots,(a_\kappa,r_\kappa))=r_\kappa$. As a prestep, we analyse a candidate for the rate function in an LDP for the pair $(\langle \pi_1,L_n^\kappa\rangle, \langle \pi_1,L_n^\kappa\rangle)$.

\begin{lemma}[An auxiliary rate function]\label{lemJ}
For $*\in \{{\rm CSMA}, {\rm ALOHA}\}$, introduce $J_*\colon [0,\infty)^2\to[0,\infty)$ as
\begin{equation}\label{J*def}
J_*(x,y)=\sup_{A\in\mathbb{R}, B\in(-\infty,\lambda)}\inf_{\mu\in\mathcal{M}_1^{\ssup{s}}(\Sigma^{\kappa})}\big[A(x-\left<\pi_1,\mu\right>)+B(y-\left<\pi_2,\mu\right>)+H(\mu\mid \mu^{\ssup{\kappa-1}}\otimes W_*)\big].
\end{equation}
Then $J_*$ is convex and hence continuous and possesses precisely one minimizer $(x_{\rm min},y_{\rm min})\in(0,\infty)^2$. 
\end{lemma}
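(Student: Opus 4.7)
The plan is to exploit the Legendre-transform structure of $J_*$. For each fixed $(A,B)\in\R\times(-\infty,\lambda)$, the integrand in \eqref{J*def} is affine in $(x,y)$, so
$$
J_*(x,y)=\sup_{A\in\R,\,B<\lambda}\bigl[Ax+By-\Lambda_*(A,B)\bigr],\qquad \Lambda_*(A,B):=\sup_{\mu\in\Mcal_1^{\ssup{\rm s}}(\Sigma^\kappa)}\bigl[A\langle\pi_1,\mu\rangle+B\langle\pi_2,\mu\rangle-H(\mu\mid\mu^{\ssup{\kappa-1}}\otimes W_*)\bigr].
$$
As a pointwise supremum of affine functions, $J_*$ is convex and lower semicontinuous on $\R^2$; since convex functions are locally Lipschitz on the interior of their effective domain, $J_*$ is continuous there.

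To locate the minimum, let $\bar\nu_*\in\Mcal_1(\Sigma^{\kappa-1})$ be the unique invariant distribution of the Markov chain $(R_i^*)_{i\in\N}$ supplied by Lemmas~\ref{lem-UCSMA} and \ref{lem-UALOHA}, and set $\mu_*^{\rm inv}:=\bar\nu_*\otimes W_*\in\Mcal_1^{\ssup{\rm s}}(\Sigma^\kappa)$ (membership in $\Mcal_1^{\ssup{\rm s}}$ is precisely the invariance of $\bar\nu_*$). Define
$$
(x_{\min},y_{\min}):=(\langle\pi_1,\mu_*^{\rm inv}\rangle,\langle\pi_2,\mu_*^{\rm inv}\rangle)\in(0,\infty)^2,
$$
positivity of both entries following from $\pi_1\geq 1$ and $\pi_2>0$ pointwise, and finiteness from the explicit exponential tails of the kernels \eqref{WCSMAdef}--\eqref{WALOHAdef}. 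Testing $\mu=\mu_*^{\rm inv}$ in the inner $\inf$ of \eqref{J*def} makes the bracket vanish identically in $(A,B)$, hence $J_*(x_{\min},y_{\min})\leq 0$. Conversely, choosing $A=B=0$ in the outer supremum together with non-negativity of relative entropy yields $J_*\geq 0$ globally. Therefore $J_*(x_{\min},y_{\min})=0$ is the global minimum.

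For uniqueness, the cleanest route is the G\"artner--Ellis picture. Since $J_*$ is the Fenchel--Legendre transform of the convex function $\Lambda_*$ on $\R\times(-\infty,\lambda)$, its minimizers coincide with the subdifferential $\partial\Lambda_*(0,0)$. Strict convexity of relative entropy in its first argument, combined with uniqueness of the invariant measure, identifies $\mu_*^{\rm inv}$ as the unique maximizer in $\Lambda_*(0,0)=0$. An envelope/perturbation argument then shows $\Lambda_*$ is differentiable at the origin with gradient $(\langle\pi_1,\mu_*^{\rm inv}\rangle,\langle\pi_2,\mu_*^{\rm inv}\rangle)=(x_{\min},y_{\min})$, so $\partial\Lambda_*(0,0)=\{(x_{\min},y_{\min})\}$ is a singleton, delivering uniqueness.

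The main obstacle I anticipate is making the differentiability of $\Lambda_*$ at the origin rigorous: the inner optimization runs over a non-compact space of measures and the test functions $\pi_1,\pi_2$ are unbounded --- precisely the source of difficulty flagged in Remark~\ref{rem-notfullLDP}. A purely variational fallback is available: if $J_*(x_1,y_1)=0$ with $(x_1,y_1)\neq(x_{\min},y_{\min})$, convexity places the entire segment $(x_t,y_t)_{t\in[0,1]}$ inside $\{J_*=0\}$; a careful analysis of near-optimizers of the inner $\inf$ for small $(A,B)$, using that $\mu_*^{\rm inv}$ is the unique zero of $\mu\mapsto H(\mu\mid\mu^{\ssup{\kappa-1}}\otimes W_*)$, should then yield a contradiction.
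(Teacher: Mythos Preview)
Your proposal is correct and follows essentially the same route as the paper: both derive convexity from the Legendre-transform structure (a supremum of affine functions) and locate the minimum at the moments of the unique invariant measure $\mu_*^{\rm inv}=\bar\nu_*\otimes W_*$, testing $\mu=\mu_*^{\rm inv}$ for the upper bound and $A=B=0$ for the lower bound. The paper is in fact terser on the uniqueness step than you are --- it simply cites \cite{deuschel} for the fact that the entropy functional $\mu\mapsto H(\mu\mid\mu^{\ssup{\kappa-1}}\otimes W_*)$ has a unique minimizer and then writes ``Hence, $J_*$ has a unique minimizer'' without any subdifferential or envelope analysis --- so your worry about making the differentiability of $\Lambda_*$ at the origin rigorous, while legitimate, already goes beyond the level of detail the paper itself supplies.
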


\begin{proof}
By \cite[Theorems 4.1.43 and Lemma 4.1.45]{deuschel}, the map $\nu\mapsto H(\nu\mid\overline{\nu}\otimes P_*)$ is convex and possesses the unique minimizer $\overline{\nu}_*\otimes P_*$, where $\nu_*$ is the invariant distribution of $(R^*_i)_{i\in\N_0}$, whose existence is implied by Condition (U). Hence, the map $\mu\mapsto H(\mu\mid\mu^{\ssup{\kappa-1}}\otimes W_*)$ is also convex with the only minimizer $\widetilde{\mu}_*:=\nu^{\ssup{\kappa-1}}_*\otimes W_*\in \Mcal_1^{\ssup{\rm s}}(\Sigma^\kappa)$. Hence, $J_*$ has a unique minimizer, which is equal to $(x_{\rm min},y_{\rm min})=(\langle\pi_1, \widetilde{\mu}_*\rangle,\langle \pi_2,\widetilde{\mu}_*\rangle)$.  It is an easy exercise to prove the convexity of $J_*$, using \eqref{J*def} (observe that it is the Legendre transform of a function that is an supremum of linear functions). In particular, it is continuous in $(0,\infty)^2$, and all the right- and the left partial derivatives exist in $(0,\infty)^2$.
\end{proof}

Recall from Theorem~\ref{thm-LDP} the rate functions
\begin{equation*}
I_{\rm CSMA}(a,s)=\sup_{A\in\R,B\in(-\infty,\lambda)}\inf_{\mu\in\Mcal_1^{\ssup{\rm s}}(\Sigma^\kappa)}\big[A\left(a-s\left<\pi_1,\mu\right>\right)+B\left(1-s\left<\pi_2,\mu\right>\right)+H(\mu\mid \mu^{\ssup{\kappa-1}}\otimes W_{\rm CSMA})\big]
\end{equation*} 
for the CSMA protocol and
\begin{equation*}
\begin{aligned}
I_{\rm ALOHA}(a,s)=\sup_{A\in\R,B\in(-\infty,\lambda)}\inf_{\mu\in\Mcal_1^{\ssup{\rm s}}(\Sigma^\kappa)}\Big[A\big(a-\smfrac{a+s}{2}\left<\pi_1,\mu\right>\big)&+B\big(1-\smfrac{a+s}{2}\left<\pi_2,\mu\right>\big)\\
&+H(\mu\mid \mu^{\ssup{\kappa-1}}\otimes W_{\rm ALOHA})\Big]
\end{aligned}
\end{equation*} 
in the case of ALOHA protocol, where $a,s\in[0,\infty)$.

\begin{cor}[Properties of $I_{\rm CSMA}$ and $I_{\rm ALOHA}$]\label{lem-PropRFs}For both $*\in\{{\rm CSMA}, {\rm ALOHA}\}$,  $I_{*}$ is convex and hence continuous in $(0,\infty)^2$ and has precisely one minimizer $(a_{\rm min},s_{\rm min})\in(0,\infty)^2$ (given in \eqref{min_CSMA} and \eqref{min_ALOHA}, respectively). 
\end{cor}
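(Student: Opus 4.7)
The plan is to reduce the claims for $I_*$ to the corresponding ones for $J_*$ from Lemma~\ref{lemJ} via Legendre--Fenchel duality and a projective change of variables. First, I would evaluate the inner infimum in the definition of $I_*$ using duality for the convex functional $\mu \mapsto H(\mu \mid \mu^{\ssup{\kappa-1}}\otimes W_*)$: introducing the logarithmic moment generating function
\begin{equation*}
\Lambda_*(A,B) := \sup_{\mu\in\Mcal_1^{\ssup{\rm s}}(\Sigma^\kappa)}\bigl[\langle A\pi_1 + B\pi_2, \mu\rangle - H(\mu \mid \mu^{\ssup{\kappa-1}}\otimes W_*)\bigr],
\end{equation*}
the inner infimum evaluates to $Aa + B - \Lambda_{\rm CSMA}(sA, sB)$ in the CSMA case and to $Aa + B - \Lambda_{\rm ALOHA}\bigl(\tfrac{a+s}{2}A, \tfrac{a+s}{2}B\bigr)$ in the ALOHA case, so that
\begin{equation*}
I_{\rm CSMA}(a,s) = \sup_{A\in\R,\,B<\lambda}\bigl[Aa + B - \Lambda_{\rm CSMA}(sA, sB)\bigr],
\end{equation*}
and analogously for ALOHA. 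The same duality gives $J_*(x,y) = \sup_{A\in\R,\, B<\lambda}[Ax + By - \Lambda_*(A,B)]$ as the Legendre conjugate of the convex $\Lambda_*$, which recovers Lemma~\ref{lemJ}.

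Second, I would perform the change of variables $(\widetilde A, \widetilde B) := (sA, sB)$ for CSMA (respectively $(\tfrac{a+s}{2}A, \tfrac{a+s}{2}B)$ for ALOHA), which transforms the supremum into one with shifted constraint $\widetilde B < s\lambda$ (resp.\ $\widetilde B < \tfrac{a+s}{2}\lambda$). On the region where the natural Legendre optimizer lies interior to this shifted constraint, the first-order conditions match those characterizing the $J_*$-optimizer at the projected point, producing the identifications $I_{\rm CSMA}(a,s) = J_{\rm CSMA}(a/s, 1/s)$ and $I_{\rm ALOHA}(a,s) = J_{\rm ALOHA}\bigl(\tfrac{2a}{a+s}, \tfrac{2}{a+s}\bigr)$; on the complementary region the supremum is attained in the boundary limit $B \to \lambda^-$, producing a second convex piece that joins the first smoothly along the active-constraint locus. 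The resulting piecewise formula for $I_*$ is then globally convex on $(0,\infty)^2$, thanks to the strict convexity and essential smoothness of $\Lambda_*$ inherited from the exponential decay of the kernels $W_*$ and from Condition (U). Continuity on the open convex set then follows automatically from convexity.

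Third, the unique minimizer would transfer from that of $J_*$. By Lemma~\ref{lemJ}, $J_*$ attains its minimum value $0$ uniquely at $(x_{\min}, y_{\min}) = (\langle\pi_1, \widetilde{\mu}_*\rangle, \langle\pi_2, \widetilde{\mu}_*\rangle)$, so the projective identification from step two would force $(a_{\min}, s_{\min}) = (x_{\min}/y_{\min},\, 1/y_{\min})$ for CSMA and $(a_{\min}, s_{\min}) = (x_{\min}/y_{\min},\, (2 - x_{\min})/y_{\min})$ for ALOHA. By Wald's identity applied to the Markov renewal process of Section~\ref{sec-MC}, these would evaluate to $a_{\min} = \lambda$ and $s_{\min} = s_*(\lambda, \kappa)$, recovering \eqref{min_CSMA}, \eqref{min_ALOHA} and the expected throughput of Lemma~\ref{lem-expval}. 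The main technical obstacle is step two: the paper's sup-inf is not manifestly a perspective of $J_*$, because the parameters $s$ and $\tfrac{a+s}{2}$ enter $\Lambda_*$ nonlinearly in the composition, so establishing the piecewise-convex patching requires careful use of the strict convexity and essential smoothness of $\Lambda_*$ near the boundary $\{B = \lambda\}$ together with the matching of values and derivatives along the active-constraint curve.
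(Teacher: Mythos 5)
There is a genuine gap, and it is in the convexity argument. The paper's proof is far more elementary than your route: it observes the perspective-type identities $I_{\rm CSMA}(a,s)=s\,J_{\rm CSMA}(a/s,1/s)$ and $I_{\rm ALOHA}(a,s)=\frac{a+s}{2}J_{\rm ALOHA}\big(\frac{2a}{a+s},\frac{2}{a+s}\big)$, transfers the unique minimizer of $J_*$ from Lemma~\ref{lemJ} through this bijective change of variables, and then proves convexity by the standard three-line computation showing that the \emph{perspective} $(a,s)\mapsto s f(a/s,1/s)$ of a convex $f$ is convex. Your identification $I_{\rm CSMA}(a,s)=J_{\rm CSMA}(a/s,1/s)$ drops precisely the prefactor $s$ (resp.\ $\frac{a+s}{2}$) that makes this work: the bare composition of a convex function with the projective map $(a,s)\mapsto(a/s,1/s)$ is \emph{not} convex in general (already $f(x,y)=x$ gives $g(a,s)=a/s$, whose Hessian is indefinite on $(0,\infty)^2$). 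So even if your change of variables and boundary analysis were carried out, convexity of $I_*$ would not follow from convexity of $J_*$ along that route.

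The patching argument you propose to repair this does not close the gap either. First, gluing two convex pieces with matching values and first derivatives along an active-constraint curve does not yield a globally convex function; convexity is a two-point inequality that can fail across the interface even with $C^1$ matching. Second, you invoke strict convexity and essential smoothness of $\Lambda_*$ ``inherited from the exponential decay of the kernels,'' but the paper explicitly states (Remark~\ref{rem-notfullLDP}) that it \emph{cannot} establish such regularity, precisely because $\mu\mapsto\langle\pi_2,\mu\rangle$ is discontinuous ($\pi_2$ is unbounded); this is the very obstruction that prevents a full LDP. You should not rest the corollary on properties the paper's framework cannot deliver. The minimizer part of your argument is essentially sound (it does transfer from Lemma~\ref{lemJ} once the correct projective identification is in place), but your further claims that $a_{\min}=\lambda$ and $s_{\min}=s_*(\lambda,\kappa)$ via Wald's identity are unsubstantiated and go beyond the statement; the paper deliberately leaves the minimizer in the implicit form \eqref{min_CSMA}--\eqref{min_ALOHA} because no closed formula for the invariant distribution is available.
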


\begin{proof} Observe that, for any $a,s\in[0,\infty)$,
$$
I_{\rm CSMA}(a,s)=sJ_{\rm CSMA}(\smfrac as,\smfrac 1s)\qquad \mbox{and}\qquad I_{\rm ALOHA}(a,s)=\frac {a+s}2 J_{\rm ALOHA}(\smfrac {2a}{a+s},\smfrac 2{a+s}).
$$
Using this, one easily sees that $I_*$ is uniquely minimized in the points 
\begin{align}\label{min_CSMA}
(a_{\rm min},s_{\rm min})=\left(\frac{x_{\rm min}}{y_{\rm min}},\frac{1}{y_{\rm min}}\right)=\left(\frac{\left<\pi_1,\tilde{\nu}\right>}{\left<\pi_2,\tilde{\nu}\right>},\frac{1}{\left<\pi_2,\tilde{\nu}\right>}\right)
\end{align}
in the CSMA case and 
\begin{align}\label{min_ALOHA}
(a_{\rm min},s_{\rm min})=\left(\frac{x_{\rm min}}{y_{\rm min}},\frac{2-x_{\rm min}}{y_{\rm min}}\right)=\left(\frac{\left<\pi_1,\tilde{\nu}\right>}{\left<\pi_2,\tilde{\nu}\right>},\frac{2-\left<\pi_1,\tilde{\nu}\right>}{\left<\pi_2,\tilde{\nu}\right>}\right)
\end{align}
in the ALOHA case.

For showing the convexity of $I_{\rm CSMA}$ we need to show that $(a,s)\mapsto s f(\frac as,\frac 1s)$ is convex if $f$ is convex. Fix $(a_1,s_1), (a_2,s_2)\in[0,\infty)^2$, then we see that
$$
f\big(\smfrac{a_1+a_2}{s_1+s_2},\smfrac 1{s_1+s_2}\big)
=f\Big(\smfrac {s_1}{s_1+s_2}\big(\smfrac {a_1}{s_1},\smfrac 1{s_1}\big)+\smfrac {s_2}{s_1+s_2}\big(\smfrac {a_2}{s_2},\smfrac 1{s_2}\big)\Big)
\leq \smfrac{s_1}{s_1+s_2} f\big(\smfrac {a_1}{s_1},\smfrac 1{s_1}\big)+ \smfrac{s_2}{s_1+s_2} f\big(\smfrac {a_2}{s_2},\smfrac 1{s_2}\big).
$$
Multiplying with $\frac 12(s_1+s_2)$ implies the convexity of $(a,s)\mapsto s f(\frac as,\frac 1s)$. A similar proof shows the convexity of $I_{\rm ALOHA}$. Indeed, again assume that $f$ is convex and pick $(a_1,s_1), (a_2,s_2)\in[0,\infty)^2$, then we see that
$$
\begin{aligned}
f\Big(\smfrac{a_1+a_2}{\frac 12(a_1+a_2+s_1+s_2)}, \smfrac 2{\frac 12(a_1+a_2+s_1+s_2)}\Big)
&\leq \frac{a_1+s_1}{a_1+a_2+s_1+s_2} f\Big(\smfrac{a_1}{\frac 12 (a_1+s_1)},\frac 2{\frac 12 (a_1+s_1)}\Big)\\
&\quad+ \frac{a_2+s_2}{a_1+a_2+s_1+s_2}f\Big(\smfrac{a_2}{\frac 12 (a_2+s_2)},\frac 2{\frac 12 (a_2+s_2)}\Big).
\end{aligned}
$$
Multyplying with $\frac 12 (a_1+a_2+s_1+s_2)$ implies the convexity of 
 $(a,s)\mapsto \frac {a+s}2 f(\frac {2a}{a+s},\frac 2{a+s})$ and hence the one of $I_{\rm ALOHA}$.
\end{proof}

Now we prove the large-deviation upper bound for $(\langle \pi_1,L_n^\kappa\rangle,\langle \pi_2,L_n^\kappa\rangle)$.

\begin{lemma}[LDP upper bound for $(\langle \pi_1,L_n^\kappa\rangle,\langle \pi_2,L_n^\kappa\rangle)$]\label{lem-LDPone}
For both $*\in\{{\rm CSMA}, {\rm ALOHA}\}$, for any closed set $F\subset (0,\infty)^2$,
\begin{align*}
\limsup_{n\to\infty}\frac{1}{n}\log\mathbb{P}\big((\langle \pi_1,L_n^\kappa\rangle,\langle \pi_2,L_n^\kappa\rangle)\in F\big)\leq -\inf_F J_*.
\end{align*}
\end{lemma}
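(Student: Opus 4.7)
The argument is the standard exponential-Chebyshev / Varadhan upper bound, followed by a min--max rewriting and a covering argument using exponential tightness.

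Fix $(x,y)\in(0,\infty)^2$. For every $A\in\R$, $B\in(-\infty,\lambda)$ and $\delta>0$, the exponential Chebyshev inequality gives
\begin{equation*}
\P\bigl(|\langle\pi_1,L_n^\kappa\rangle-x|\leq\delta,\ |\langle\pi_2,L_n^\kappa\rangle-y|\leq\delta\bigr)\leq e^{-n(Ax+By)+n\delta(|A|+|B|)}\,M_n(A,B),
\end{equation*}
with $M_n(A,B):=\E[\exp(nA\langle\pi_1,L_n^\kappa\rangle+nB\langle\pi_2,L_n^\kappa\rangle)]$. Next I would apply Varadhan's lemma (see e.g.\ \cite{dembozeitouni}) to the LDP for $(L_n^\kappa)_n$ with rate function \eqref{entropytwo} to get
\begin{equation*}
\limsup_{n\to\infty}\frac{1}{n}\log M_n(A,B)\leq \sup_{\mu\in\Mcal_1^{\ssup{\rm s}}(\Sigma^\kappa)}\bigl[A\langle\pi_1,\mu\rangle+B\langle\pi_2,\mu\rangle-H(\mu\mid\mu^{\ssup{\kappa-1}}\otimes W_*)\bigr].
\end{equation*}
The restriction $B<\lambda$ is dictated by the finiteness of the one-step integral $\int e^{Ak+Bs}\,W_*((a,t),(k,ds))$: in both protocols the $\sigma$-marginal of $W_*$ has an exponential tail of rate $\lambda$, so $B<\lambda$ is precisely the range in which this one-step transform stays bounded, while the Poisson-type tail of the $k$-coordinate imposes no restriction on $A\in\R$.

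Combining the two displays, letting $\delta\downarrow 0$, and rewriting $-Ax-By+\sup_\mu[\cdots]$ as $-\inf_\mu[A(x-\langle\pi_1,\mu\rangle)+B(y-\langle\pi_2,\mu\rangle)+H(\mu\mid\mu^{\ssup{\kappa-1}}\otimes W_*)]$, then taking the supremum over admissible pairs $(A,B)$, produces the pointwise bound
\begin{equation*}
\limsup_{n\to\infty}\frac{1}{n}\log\P\bigl((\langle\pi_1,L_n^\kappa\rangle,\langle\pi_2,L_n^\kappa\rangle)\in B_\delta(x,y)\bigr)\leq -J_*(x,y)+o_\delta(1).
\end{equation*}
The explicit form of $W_*$ yields one-step exponential tail estimates for $A_i$ and $\sigma_i$ that are uniform in the $(\kappa-1)$-past, which iterate to exponential tightness of the pair $(\langle\pi_1,L_n^\kappa\rangle,\langle\pi_2,L_n^\kappa\rangle)$. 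This reduces the closed-set bound to a large compact box, on which a finite covering by balls $B_\delta(x,y)$ together with the pointwise bound and the lower semicontinuity of $J_*$ from Lemma~\ref{lemJ} delivers $\limsup_n\tfrac{1}{n}\log\P((\cdot,\cdot)\in F)\leq -\inf_F J_*$.

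The main technical obstacle is the Varadhan step: $\mu\mapsto\langle\pi_2,\mu\rangle$ is only lower semicontinuous in the weak topology --- the same non-continuity that blocks a matching lower bound, cf.\ Remark~\ref{rem-notfullLDP}. A standard remedy is to truncate with $\pi_i\wedge M$ (bounded and continuous), apply Varadhan to the continuous functional, and let $M\to\infty$, controlling the tail via uniform exponential moment estimates of the form $\E[\exp(n(\lambda-\varepsilon)\langle\pi_2,L_n^\kappa\rangle)]=e^{O(n)}$. These follow from the same kernel estimates that gave $B<\lambda$ above, thereby closing the argument.
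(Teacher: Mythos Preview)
Your outline is the G\"artner--Ellis upper bound in thin disguise, and that is exactly the device the paper invokes; so the skeleton agrees. The real work, in both arguments, is the bound
\[
\limsup_{n\to\infty}\frac1n\log M_n(A,B)\ \le\ \sup_{\mu}\bigl[A\langle\pi_1,\mu\rangle+B\langle\pi_2,\mu\rangle-H(\mu\mid\mu^{\ssup{\kappa-1}}\otimes W_*)\bigr],
\]
and here the paper takes a different and cleaner route for $\pi_2$. It does \emph{not} truncate $\pi_2$: instead it observes that multiplying $W_*$ by $e^{Ak+Bs}$ is, up to a shift of $A$, the same as replacing the Poisson intensity $\lambda$ by $\lambda-B$ (the identity $W_*^{\ssup{A,B,\lambda}}=W_*^{\ssup{D,0,\lambda-B}}$ with $D=A+\log\frac{\lambda}{\lambda-B}$). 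This turns $M_n(A,B)$ under $\E_*^{\ssup\lambda}$ into $\E_*^{\ssup{\lambda-B}}[e^{nD\langle\pi_1,L_n^\kappa\rangle}]$, so $\pi_2$ disappears from the Varadhan step entirely and only $\pi_1$ must be truncated. The cut for $\pi_1$ then succeeds because the conditional law of each $A_i$ is a shifted Poisson with uniformly bounded parameter: the super-exponential factor $1/m!$ in the tail lets the Chebyshev parameter be sent to infinity with $m$, driving the tail rate to $-\infty$.

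Your proposal to truncate $\pi_2$ directly is not wrong in principle, but the justification you offer is too weak. The statement $\E[e^{n(\lambda-\varepsilon)\langle\pi_2,L_n^\kappa\rangle}]=e^{O(n)}$ only says that a tilted moment is finite; it does \emph{not} make the contribution on $\{\langle(\pi_2-M)_+,L_n^\kappa\rangle>\delta\}$ negligible relative to the main term. Because $\sigma_i$ has only an exponential (not super-exponential) tail, the extra Chebyshev parameter $K$ you would need on top of the already present factor $e^{nB\langle\pi_2,\cdot\rangle}$ is confined to $K<\lambda-B$ and cannot be sent to infinity; a crude product-of-one-step-sups bound then yields a tail rate that need not lie below the target. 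One can still close the argument, but it requires the finer input that the exponential growth rate of the twice-tilted moment converges to the original one as the truncation level $M\to\infty$ (a continuity statement for the tilted principal eigenvalue), which is an additional step you have not supplied. The paper's change-of-measure trick sidesteps this difficulty completely.
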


\begin{proof} We are going to apply the G\"artner--Ellis theorem, which implies our assertion if the function $\Lambda$, defined by
\begin{equation}\label{Lambdadef}
\Lambda(A,B)=\lim_{n\to\infty}\frac{1}{n}\log\mathbb{E}\left[\e^{n(A\left<\pi_1,L_n^{\kappa}\right>+B\left<\pi_2,L_n^{\kappa}\right>)}\right],\qquad A\in\R,B\in(-\infty,\lambda),
\end{equation}
exists and is lower semi-continuous. In this case, the rate function (which is convex and lower semi-continuous) is given by the Legendre transform of $\Lambda$, which we will identify as the function $J_*$ defined in \eqref{J*def}. 

If $\Lambda$ would be differentiable, then the G\"artner--Ellis theorem would provide also the corresponding lower bound and hence a full LDP. However, in our case we do not know if this is true, due to the discontinuity of the mappings $\mu\mapsto \langle \pi_i,\mu\rangle$ for $i\in\{1,2\}$ in the weak topology of probability measures, since $\pi_i$ is not bounded.

So let us identify the limit in \eqref{Lambdadef}, which will be done with the help of the LDP for $(L_n^\kappa)_{n\in\N}$. Also here, we are facing the serious problem of missing unboundedness of $\pi_1$ and $\pi_2$; but we found a way around this. Indeed, we absorb the $\pi_2$-part in the transition kernel and employ a cutting argument for the $\pi_1$-part. We write now $\E_*=\mathbb{E}_{*}^{\ssup\lambda}$ for the expectation with respect to our Markov chain (stressing the arrival parameter $\lambda$ of the underlying PPP). The following trick absorbs the $\pi_2$-integral into the transition kernel. For this, we write $W_*^{\ssup\lambda}=W_*$ to stress the parameter $\lambda$ in the transition kernel of our Markov kernel, and we introduce the transformed kernels
\begin{equation}\label{W*AB}
W_{*}^{\ssup{A,B,\lambda}}((a,t),(k,\d s))=\e^{Ak+Bs}W_*((a,t),(k,\d s)),\qquad A\in\R,B\in(-\infty,\lambda), s\in(0,\infty),k\in\N.
\end{equation}
Then we observe that
\begin{equation}\label{W*ABscaling}
W_{*}^{\ssup{A,B,\lambda}}=W_{*}^{\ssup{D,0,\lambda-B}},\qquad \mbox{where }D:=A+\log\frac{\lambda}{\lambda-B}.
\end{equation}
As a consequence,
\begin{equation}\label{Hscaling}
H(\mu\mid \mu^{(\kappa-1)}\otimes W_*^{\ssup{\lambda-B}})
=\log\frac{\lambda}{\lambda-B}\left<\pi_1,\mu\right>-B\left<\pi_2,\mu\right>+H(\mu\mid \mu^{\ssup{\kappa-1}}\otimes W_*).
\end{equation}

We have from \eqref{W*ABscaling} that
$$
\mathbb{E}_*^{\lambda}\left[\e^{n(A\left<\pi_1,L_n^{\kappa}\right>+B\left<\pi_2,L_n^{\kappa}\right>)}\right]=\mathbb{E}_*^{\ssup{\lambda-B}}\left[\e^{n D\left<\pi_1,L_n^{\kappa}\right>} \right],\qquad n\in\N.
$$
For definiteness, assume that $D>0$; the opposite case is almost the same. Since we can lower bound $\pi\geq \pi\wedge m$ and since $\mu\mapsto \left<\pi_1\wedge m, \mu\right>$ is continuous and since $(L_n^{\kappa})_{n\in\mathbb{N}}$ satisfies an LDP with rate function $\mu\mapsto H(\mu\mid \mu^{\ssup{\kappa-1}}\otimes W_*)$, Varadhan's lemma tells us that
\begin{equation}\label{lowboundexpmom}
\liminf_{n\to\infty}\frac{1}{n}\log\mathbb{E}_*^{\ssup{\lambda-B}}\left[\e^{nD \left<\pi_1,L_n^{\kappa}\right>}\right]
\geq \lim_{m\to\infty}\sup_{\mu}\left[D \left<\pi_1\wedge m,\mu\right>-H(\mu\mid \mu^{\ssup{\kappa-1}}\otimes W_*^{\ssup{\lambda-B}})\right]=M(A,B),
\end{equation}
where
\begin{equation}\label{Mdef}
M(A,B)=\sup_{\mu}\left[A\left<\pi_1,\mu\right>+B\left<\pi_2,\mu\right>-H(\mu\mid \mu^{(\kappa-1)}\otimes W_*)\right],
\end{equation}
where we used \eqref{Hscaling}. We will show in the following that also the complementary inequality to \eqref{lowboundexpmom} holds, which shows that \eqref{Lambdadef} holds with $\Lambda=M$. This finishes the proof of the lemma, since it easily follows from \eqref{Mdef} that $J_*$ defined in \eqref{J*def} is the Legendre transform of $M=\Lambda$. 

For estimating in the opposite direction, we need to employ a cutting argument as follows. For any $m\in\N$ and $\delta>0$,
$$
\begin{aligned}
\mathbb{E}_*^{\ssup{\lambda-B}}\left[\e^{n D\left<\pi_1,L_n^{\kappa}\right>} \right]
&=\mathbb{E}_*^{\ssup{\lambda-B}}\left[\e^{n D\left<\pi_1,L_n^{\kappa}\right>} \1_{\left\{D|\left<\pi_1-\pi_1\wedge m,L_n^{\kappa}\right>| \leq\delta \right\}}+\e^{n D\left<\pi_1,L_n^{\kappa}\right>} \1_{\left\{D|\left<\pi_1-\pi_1\wedge m,L_n^{\kappa}\right>|>\delta \right\}}\right]
\\
&\leq \e^{\delta n}\mathbb{E}_*^{\ssup{\lambda-B}}\left[\e^{nD \left<\pi_1\wedge m,L_n^{\kappa}\right>}\right]
+\mathbb{E}_*^{\ssup{\lambda-B}}\left[\e^{n D\left<\pi_1,L_n^{\kappa}\right>} \1_{\left\{D\sum_{i=1}^n(A_i-m)_+>\delta n\right\}}\right].
\end{aligned}
$$
We need to show that the exponential large-$n$ rate of the last term is arbitrarily small if $m$ is picked large, then the complementary inequality to \eqref{lowboundexpmom} follows. From Lemmas~\ref{lem-MarkovCSMA} and \ref{lem-MarkovALOHA}, respectively, we know that, given $(\sigma_i)_{i\in\N}$, under $\E^{\ssup{\lambda-B}}$, the random variables $A_1,\dots,A_n$ are independent and have the distribution of $1+$ a $\Poi_{\lambda_i}$-distributed random variable, where $\lambda_i=(\lambda-B) \gamma_i$ in the CSMA-case and $\lambda_i=(\lambda-B)( \gamma_i+B(s)/\kappa)$ in the ALOHA-case, where $\gamma_i=(1-\sum_{j=1}^{\kappa-1}\sigma_{i-j})_+$. In any case, we have $\lambda_i\leq 2\lambda$ for any $i$ and can therefore estimate
$$
\P_*^{\ssup{\lambda-B}}\big(A_i=k\,\big|\, (\sigma_j)_{j\in\N}\big)\leq \e^{2\lambda}\Poi_{2\lambda}(k-1),\qquad k\in\N.
$$
Hence, we can estimate, writing $\E$ for expectation with respect to $\Poi_{2\lambda}$, using the exponential Chebyshev inequality with some $K>0$,
\begin{equation}\label{LDPproofCheby}
\begin{aligned}
\mathbb{E}_*^{\ssup{\lambda-B}}\left[\e^{n D\left<\pi_1,L_n^{\kappa}\right>} \1_{\left\{D\sum_{i=1}^n(A_i-m)_+>\delta n\right\}}\right]
&\leq \e^{2\lambda n} \E\left[\e^{n D\left<\pi_1,L_n^{\kappa}\right>} \1_{\left\{D\sum_{i=1}^n(A_i-m)_+>\delta n\right\}}\right]\\
&\leq \e^{2\lambda n} \e^{-\frac{K\delta n}{D}}\E\left[\e^{D\sum_{i=1}^n A_i}\,\e^{K\sum_{i=1}^n (A_i-m)_+}\right]\\
&=\exp\Big\{-n\Big[-\log (2\lambda)+K\frac \delta D-\log \E\big[\e^{D A_1+K(A_1-m)_+}\big]\Big]\Big\}.
\end{aligned}
\end{equation}
We need to show that the last term in the brackets can be made arbitrarily large as $m\to\infty$ with an appropriate choice of $K=K(m)$. We estimate the last term as follows:
$$
\begin{aligned}
\E\big[\e^{D A_1+K(A_1-m)_+}\big]&=\sum_{k=0}^m\e^{-2\lambda}\frac {(2 \lambda)^k}{k!} \e^{D k}+\sum_{k>m}\e^{-2\lambda}\frac {(2\lambda)^k}{k!} \e^{D k}\e^{K(k-m)}\\
&\leq \e^{2\lambda(\e^{D}-1)} +\frac{(2\lambda \e^D)^m}{m!}\e^{2\lambda(\e^{D+K}-1)},
\end{aligned}
$$
where we used an index shift and estimated $\frac 1{(k+m)!}\leq \frac1{k!}\,\frac 1{m!}$. Now it is easy to see that one can pick $K=K(m)\to\infty$ in such a way that the term in the brackets on the right of \eqref{LDPproofCheby} diverges to $\infty$ (take $K$ of order $\log m$).

This finishes the proof of the complementary inequality to \eqref{lowboundexpmom} and therefore finishes the proof of the lemma.

\end{proof}

Now we can prove the main result of this section, the LDP upper bounds for $\frac 1t(A(t),S(t))$. This finishes the proof of Theorem~\ref{thm-LDP}.  

\begin{lemma}\label{LDPCSMA} For $*\in\{{\rm ALOHA, CSMA}\}$, as $t\to\infty$, the pair $\frac 1t (A(t),S(t))$ satisfies the LDP upper bound on $(0,\infty)^2$ with rate function $I_*$, i.e., for any closed set $F\subset (0,\infty)^2$ we have
\begin{align*}
\limsup_{t\to\infty}\frac{1}{t}\log\mathbb{P}\Big(\frac{1}{t}\big(S(t),A(t)\big)\in F\Big)\leq -\inf_F I_*
\end{align*}
where $I_*$ is given before Lemma~\ref{lem-PropRFs}. Furthermore, in both cases $(\frac 1t (A(t),S(t)))_{t>0}$ is exponentially tight, i.e., for any $M>0$ there is an $K>0$ such that $\P( \frac 1t (A(t),S(t))\in ([0,K]^2)^{\rm c})\leq \e^{-Mt}$ for any large $t$.
\end{lemma}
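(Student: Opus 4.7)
The plan is to transport the LDP upper bound of Lemma~\ref{lem-LDPone} for the pair $(\langle\pi_1,L_n^\kappa\rangle,\langle\pi_2,L_n^\kappa\rangle)=(\tfrac 1n\sum_{i=1}^nA_i,\tfrac 1n\sum_{i=1}^n\sigma_i)$ to the deterministically time-indexed pair $\tfrac 1t(A(t),S(t))$, via the pathwise identification of the latter as a functional of $(A_i,\sigma_i)_i$ (the remark following Proposition~\ref{lem-Markov}). Exponential tightness comes first and is essentially for free: $A(t)\sim\Poi(\lambda t)$ and $S(t)\le A(t)$ in both protocols (every success is in particular an arrival), so the Chernoff bound gives $\P(A(t)>Kt)\le\exp(-t[K\log(K/\lambda)-K+\lambda])$. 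Since the exponent diverges as $K\to\infty$, for each $M>0$ one finds $K=K(M)$ with $\P(\tfrac 1t(A(t),S(t))\notin[0,K]^2)\le\e^{-tM}$; this is the exponential tightness claim and reduces the LDP upper bound to compact $F\subset(0,\infty)^2$.

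\textbf{Upper bound on compact sets.} I would cover $F$ by finitely many small closed boxes $B_\delta(a_0,s_0)$ and handle each separately. In the CSMA case, \eqref{Description} gives the exact identity $S(t)=\sup\{m\colon\sum_{i=1}^m\sigma_i\le t\}$ and $A(t)=\sum_{i=1}^{S(t)+1}A_i$ up to one boundary attempt, so the event $\{\tfrac 1t(A(t),S(t))\in B_\delta(a_0,s_0)\}$ is contained in a union, over the at most $O(t)$ integers $n$ with $n/t\in(s_0-\delta,s_0+\delta)$, of events on which $\tfrac 1n\sum_{i=1}^n\sigma_i$ lies within $O(\delta)$ of $1/s_0$ and $\tfrac 1n\sum_{i=1}^n A_i$ within $O(\delta)$ of $a_0/s_0$. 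Lemma~\ref{lem-LDPone} applied to each such $n$, the polynomial prefactor absorbed into the exponential, and the identity $s_0 J_{\rm CSMA}(a_0/s_0,1/s_0)=I_{\rm CSMA}(a_0,s_0)$ (from the proof of Corollary~\ref{lem-PropRFs}) yield $\exp(-tI_{\rm CSMA}(a_0,s_0)+t\,o_\delta(1))$. Shrinking $\delta$ and combining the finitely many boxes delivers the bound. The ALOHA case proceeds along the same template with $n$ now playing the role of the number of admissions $N(t)=\sup\{m\colon\sum_{i=1}^m\sigma_i\le t\}$; the explicit representation \eqref{S(t)Aloha} of $S(t)$ in terms of $(A_i,\sigma_i)$ enforces the concentration $n/t\approx(a_0+s_0)/2$, $\tfrac 1n\sum\sigma_i\approx 2/(a_0+s_0)$ and $\tfrac 1n\sum A_i\approx 2a_0/(a_0+s_0)$, after which the matching identity $\tfrac{a_0+s_0}{2}J_{\rm ALOHA}(\tfrac{2a_0}{a_0+s_0},\tfrac{2}{a_0+s_0})=I_{\rm ALOHA}(a_0,s_0)$ closes the estimate.

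\textbf{Main obstacle.} The real work is the ALOHA case: $S(t)$ depends on $(A_i,\sigma_i)$ in an intricate, non-continuous way via \eqref{S(t)Aloha}, reflecting which of the admissions are destroyed during their one-unit service windows. One has to verify that, on the LDP scale, this dependence is faithfully captured by the empirical $\kappa$-string measure $L_n^\kappa$ together with a deterministic relation pinning $N(t)$ to $(A(t),S(t))$ up to $o(t)$ corrections on events of probability at least $\exp(-tI_{\rm ALOHA}(a_0,s_0))$. This is precisely the same discontinuity (of $\mu\mapsto\langle\pi_2,\mu\rangle$ and of the collision counting functional) that obstructs the matching lower bound (cf.\ Remark~\ref{rem-notfullLDP}); for the upper bound, however, the variational characterisation of $I_{\rm ALOHA}$ combined with Lemma~\ref{lem-LDPone} suffices once the union bound over $n$ is carried out, and combining with exponential tightness extends the estimate from compact to arbitrary closed $F\subset(0,\infty)^2$, completing Theorem~\ref{thm-LDP}.
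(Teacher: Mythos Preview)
Your overall strategy is the paper's, and your exponential tightness argument via $S(t)\le A(t)$ and the Chernoff bound for $A(t)\sim\Poi_{\lambda t}$ is in fact cleaner than the paper's (which bounds $\P(S(t)>Kt)$ through the partial sums of $\sigma_i$ and the marginal LDP upper bound for $\langle\pi_2,L_n^\kappa\rangle$).

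There is, however, a genuine gap in your box argument. You claim that on $\{S(t)=n,\ A(t)\in(a_0\pm\delta)t\}$ the quantity $\tfrac1n\sum_{i=1}^n\sigma_i$ lies within $O(\delta)$ of $1/s_0$. But $S(t)=n$ is equivalent to $\sum_{i=1}^n\sigma_i\le t<\sum_{i=1}^n\sigma_i+\sigma_{n+1}$, which yields only the \emph{one-sided} bound $\tfrac1n\sum\sigma_i\le t/n$; the lower bound would require $\sigma_{n+1}=o(t)$, and $\sigma_{n+1}$ is unbounded. The same issue affects $\tfrac1n\sum A_i$ through $A_{n+1}$. Hence the event $\{\tfrac1t(A(t),S(t))\in B_\delta(a_0,s_0)\}$ translates not into small boxes for $(\langle\pi_1,L_n^\kappa\rangle,\langle\pi_2,L_n^\kappa\rangle)$ but into \emph{quadrants}, and Lemma~\ref{lem-LDPone} then gives $\exp(-n\inf_{F'}J_*)$ with $F'$ a quadrant. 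To identify $\inf_{F'}J_*$ with $J_*(a_0/s_0,1/s_0)$ (or the ALOHA analogue) up to $o_\delta(1)$ one needs convexity, and this is exactly the paper's route: it reduces via exponential tightness and the local criterion \eqref{LDPcriterion} to a single point $(a,s)$, enlarges $B_\eps(a,s)$ to the quadrant selected by the signs of $\partial_a I_*(a,s)$ and $\partial_s I_*(a,s)$, and invokes the convexity of $I_*$ from Corollary~\ref{lem-PropRFs} to place the infimum at the corner. Your argument can be repaired either by adding a separate tail estimate for $\sigma_{n+1}$ and $A_{n+1}$ (feasible, since both have uniformly exponential tails conditional on the past), or---more simply---by switching to quadrants and convexity as the paper does. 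Your discussion under ``Main obstacle'' conflates this boundary-term issue with the unboundedness of $\pi_2$; the latter is already absorbed in Lemma~\ref{lem-LDPone} and is not the obstruction here.
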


\begin{proof} Let us explain our proof strategy. As we already mentioned above, the sequence $(L_n^{\ssup2})_{n\in\N}$ of empirical pair measures of the Markov chain $(R_i^*)_{i\in\N_0}$ satisfies an LDP with rate function given in \eqref{entropy}. Then it is clear that the sequence $(L_n^\kappa)_{n\in\N}$ of empirical $\kappa$-string measures satisfies an LDP with rate function given in \eqref{entropytwo}. It will turn out that $(A(t),S(t))$ can be expressed in terms of the partial sums 
\begin{equation}\label{timeinverses}
\sum_{i=1}^n A_i=n\langle \pi_1,L_n^\kappa\rangle\qquad\mbox{and}\qquad \sum_{i=1}^n \sigma_i=n\langle \pi_2,L_n^\kappa\rangle.
\end{equation}
We will derive the upper bound of the LDP for $\frac 1t (A(t),S(t))$ from this, in combination with the upper bound of Lemma~\ref{lem-LDPone} for the pair $(\langle \pi_1,L_n^\kappa\rangle,\langle \pi_2,L_n^\kappa\rangle)$. This derivation will be heuristically done in Step 1 of the proof. Since $(A(t),S(t))$ is basically a pair of time-inverses, probabilities of events of one-sided inequalities like $\{A(t)<a,S(t)>s\}$ for $a,s\in(0,\infty)$ are relatively easy to handle, and this we will do in Step 2. The LDP upper bound for $\frac 1t (A(t),S(t))$ will be proved in Step 3, while the proof of  the exponential tightness is contained in Step 2.

\medskip

{\bf Step 1: Heuristics.} We assume that $(\langle \pi_1,L_n^\kappa\rangle,\langle \pi_2,L_n^\kappa\rangle)$ satisfies the LDP with rate function $J_{*}$ given in \eqref{J*def} and derive heuristically the LDP for $\frac 1t(A(t),S(t))$ from that. Let us first treat the CSMA protocol. Fix $a,s>0$.  Using \eqref{timeinverses}, we see (ignoring that $at$ and $st$ may be not integers) that, as $t\to\infty$,
\begin{equation}\label{S(t)CSMA}
\begin{aligned}
\mathbb{P}\left(S(t)\approx st, A(t)\approx at\right)&=\mathbb{P}\Big(\sum_{i=1}^{st}\sigma_i\approx t, \sum_{i=1}^{st}A_i\approx at\Big) = \mathbb{P}\Big(\langle\pi_2, L_{st}^{\kappa}\rangle\approx\frac{1}{s}, \langle\pi_1, L_{st}^{\kappa}\rangle\approx\frac{a}{s}\Big)
\\
&\approx\exp\big(-st J_{\rm CSMA}(a/s, 1/s)\big)=\exp\big(-tI_{\rm CSMA}(a,s)\big).
\end{aligned}
\end{equation}
This finishes the heuristics for the CSMA case.

In the ALOHA case, let $\widetilde{S}(t)$ be number of potentially successful messages that arrive by time $t$, i.e., those that pick a free channel at arrival. Since every time that a new arriving message picks a busy channel, the old one that is already in this channel also gets lost, the number of successfully delivered messages by time $t$ is obtained by subtracting the number of new arriving messages taking a busy channel, from the number of potentially successful messages. The former is equal to $A_i-1$ in each interval $(\widetilde{T}_i, \widetilde{T}_{i+1}]$. Considering $\widetilde{S}(t)$ potentially successful messages and therefore $\widetilde{S}(t)$ intervals, it means that we have
\begin{equation}\label{S(t)Aloha}
S(t)=\widetilde{S}(t)-\sum_{i=1}^{\widetilde{S}(t)}(A_i-1)=\widetilde{S}(t)-\Big(-\widetilde{S}(t)+\sum_{i=1}^{\widetilde{S}(t)}A_i\Big)=2\widetilde{S}(t)-\sum_{i=1}^{\widetilde{S}(t)}A_i\approx 2\widetilde{S}(t)-A(t).
\end{equation}
Furthermore, $\sum_{i=1}^{\widetilde{S}(t)}\sigma_i\approx t$, as we have already seen in the case of CSMA. Now, let $s,a>0$, the we have, as $t\to\infty$,
$$
\begin{aligned}
\mathbb{P}(S(t)=st, A(t)=at)&\approx\mathbb{P}\big(\widetilde{S}(t)\approx\smfrac{1}{2}(a+s)t, A(t)=at\big)
\\
&=\mathbb{P}\Big(\sum_{i=1}^{\frac{1}{2}(s+a)t}\sigma_i\approx t, \sum_{i=1}^{\frac{1}{2}(s+a)t}A_i\approx at\Big)
\\
&\approx \mathbb{P}\Big(\big\langle\pi_2, L^{\kappa}_{\frac{1}{2}(s+a)t}\big\rangle=\frac{2}{s+a}, \big\langle\pi_1, L^{\kappa}_{\frac{1}{2}(s+a)t}\big\rangle=\frac{2a}{s+a}\Big)
\\
&\approx\exp\Big(-t\frac {a+s}2 J_{\rm ALOHA}(\smfrac{2a}{a+s}, \smfrac 2{a+s})\Big)=\exp\big(-tI_{\rm ALOHA}(a,s)\big), 
\end{aligned}
$$
which finishes the heuristics.

\medskip

{\bf Step 2: Exponential rates for quadrants.}
As we mentioned, one-sided inequalities for $S(t)$ and $A(t)$ are relatively easily to handle. We demonstrate this by showing, as a first step, the exponential tightness of $(\frac 1t(A(t),S(t)))_{t>0}$. Indeed, for any $K>0$, 
$$
\P\big({\smfrac 1t}(A(t),S(t))\in \big([0,K]^2\big)^{\rm c}\big)
\leq \P(A(t)> t K)+\P(S(t)> t K).
$$
It is easy to see that $\lim_{K\to\infty}\limsup_{t\to\infty}\frac 1t \log \P(A(t)> t K)=-\infty$, observing that $A(t)$ is $\Poi_{\lambda t}$-distributed. Now, using the LDP for $(L_n^{\kappa})_{n\in\mathbb{N}}$ and again using the G\"artner--Ellis theorem we can derive an LDP upper bound for $(\left<\pi_2,L_n^{\kappa}\right>)$ as in Lemma \ref{lem-LDPone} with rate function
\begin{align*}
\tilde{J}(y)=\sup_{B\in(-\infty, \lambda)}\inf_{\mu\in\mathcal{M}_1^{\ssup s}(\Sigma^{\kappa-1})}\left[B(y-\left<\pi_2,\mu\right>)+H(\mu\mid \mu^{\ssup \kappa-1}\otimes W_*)\right].
\end{align*}
Then, using also \eqref{timeinverses}, we estimate
$$
\P(S(t)> t K)=\P\Big( \sum_{i=1}^{tK}\sigma_i<t\Big)
\leq \P\big(\langle \pi_2,L_{tK}^\kappa\rangle\leq \smfrac 1K\big)
\leq \e^{-tK \inf_{(0,1/K]}\tilde{J}}\e^{o(t)},
$$. 
Then, it is easy to see that $K \inf_{(0,1/K]}\tilde{J}\to\infty $ as $K\to\infty$, hence exponential tightness follows.

We further use the simple relation between the partial sum of the $\sigma_i$'s and $(A(t),S(t))$ for proving that, for any $(a,s)\in(0,\infty)^2$,
\begin{equation}\label{LDPquadrant}
\partial_aI(a,s)>0,\partial_sI(a,s)>0\qquad\Longrightarrow\qquad \limsup_{t\to\infty}\frac 1t\log\P\big(\smfrac 1t(A(t),S(t)\big)\in [a,\infty)\times [s,\infty)\big)\leq -I_*(a,s).
\end{equation}
Analogous statements for all the other sign combinations of the partial derivatives with the respective quadrants are also true and are proved in the same manner; we omit these proofs. Since $I_*$ is continuous, we can freely replace the closed set $[a,\infty)\times [s,\infty)$ by $(a,\infty)\times (s,\infty)$.

We prove now \eqref{LDPquadrant} for the CSMA case; the other one is similar and will be omitted.
Fix $(a,s)$ such that $\partial_aI(a,s)$ and $\partial_sI(a,s)$ are both positive. For showing \eqref{LDPquadrant}, we see that (again using \eqref{timeinverses})
\begin{align*}
\P\big(\smfrac 1t(A(t),S(t))\in [a,\infty)\times [s,\infty)\big) &=\P\big(S(t)\geq st,A(t)\geq at\big)\\
&=\P\Big(\langle \pi_2,L_{ts}^\kappa\rangle \leq \frac 1{s}, 
\langle \pi_1,L_{ts}^\kappa\rangle \geq\frac{a}{s}\Big)\\
&\leq \exp\Big(-ts\inf_{[\smfrac {a}{s},\infty)\times (-\infty,\frac 1 {s}]}J_{{\rm CSMA}}\Big)\e^{o(t)}
\\&=\exp\Big(-t \inf_{[a,\infty)\times [s,\infty)}I_{\rm CSMA}\Big)\,\e^{o(t)}= \exp\big(-t I_{\rm CSMA}(a,s)\big)\,\e^{o(t)},
\end{align*}
where in the estimate we used the upper bound in the LDP for $L_{ts}^\kappa$ and afterwards that  $I_{\rm CSMA}(a,s)=s J_{\rm CSMA}(\frac as,\frac 1s)$ and then that it is continuous and assumes its infimum over $[a,\infty)\times [s,\infty)$ in the corner of this quadrant. The latter comes from the convexity of $I_{\rm CSMA}$ and the positivity of the two partial derivatives. 
 
For handling the case that one of the two partial derivatives is zero, we claim that
\begin{equation}\label{LDPhalfplane}
\partial_aI(a,s)=0,\partial_sI(a,s)>0\qquad\Longrightarrow\qquad \limsup_{t\to\infty}\frac 1t\log\P\big(\smfrac 1t(A(t),S(t)\big)\in [a,\infty)\times [s,\infty)\big)\leq -I_*(a,s).
\end{equation} 
The proof of this is  similar to the proof of \eqref{LDPquadrant}, but estimates against the half plane $[0,\infty)\times [s,\infty)$ and uses that the infimum of $I_*$ over $[0,\infty)\times [s,\infty)$ is attained at $(a,s)$ by convexity of $\widetilde a\mapsto I_*(\widetilde a,s)$ and because of $\partial_aI(a,s)=0$. We omit the details.

\medskip

{\bf  Step 3: Proof of the upper bound.} We use the fact that an  exponentially tight sequence $(X_t)_{t>0}$ of random variables satisfies the LDP upper bound with rate function $I$ on a Polish space $\mathcal X$  if 
\begin{equation}\label{LDPcriterion}
\lim_{\eps\downarrow0}\lim_{t\to\infty}\frac 1t \log \P(X_t\in B_\eps(x))\leq -I(x),\qquad x\in\mathcal X.
\end{equation}
The proof of this fact is an elementary exercise using standard compactness arguments; we omit the proof.
 
We now check \eqref{LDPcriterion}. Fix $x=(a,s)\in (0,\infty)^2$. Let us first consider the case that $\partial_a I(a,s)$ and $\partial_s I(a,s)$ are both not equal to zero. Let us assume, for definiteness, that $\partial_aI(a,s)>0$ and $\partial_s I(a,s)>0$. Pick an $\eps>0$ such that the two partial derivatives are positive inside $B_\eps(a)\times B_\eps(s)$. Then we can estimate from above, as $t\to\infty$, according to \eqref{LDPquadrant},
$$
\P\big(\smfrac 1t(A(t),S(t))\in B_\eps(a)\times B_\eps(s)\big)
\leq \P\big(\smfrac 1t(A(t),S(t))\in [a-\eps,\infty)\times [s-\eps,\infty)\big)
\leq \e^{-t I_*(a-\eps,s-\eps)}\e^{o(t)}.
$$
Since $I_*$ is continuous, we see that  \eqref{LDPcriterion} is satisfied.

It remains to handle the case where one of the two partial derivatives is equal to zero. If both are, then $(a,s)$ is the unique minimal point $(a_{\rm min},s_{\rm min})$ of $I_*$, and the exponential rate is equal to zero, which is equal to $I_*(a_{\rm min},s_{\rm min})$. The remaining case that precisely one of the two partial derivatives vanishes, can be handled either by an approximation argument (using the continuity of $I_*$) or by appealing to \eqref{LDPhalfplane}; we omit the details.
\end{proof}

\end{document}